\newtheorem{Thm}{Theorem}[section]
\newtheorem{Prop}[Thm]{Proposition}
\newtheorem{Lem}[Thm]{Lemma}
\def \ov {\overline}
\theoremstyle{definition}
\theoremstyle{remark}
\numberwithin{equation}{section}
\newcommand{\Aut}{\operatorname{Aut}}
\newcommand{\Hom}{\operatorname{Hom}}
\newcommand{\Syl}{\operatorname{Syl}}
\newcommand{\Out}{\operatorname{Out}}
\newcommand{\Inn}{\operatorname{Inn}}
\newcommand{\PSL}{\operatorname{PSL}}
\newcommand{\GL}{\operatorname{GL}}
\newcommand{\SL}{\operatorname{SL}}
\renewcommand{\epsilon}{\varepsilon}
\newcommand{\F}{\mathcal{F}}
\def \ov {\overline}
\begin{document}

\title{Fusion systems on maximal class $3$-groups of rank two revisited}

\author{Chris Parker}
\address{
School of Mathematics\\
University of Birmingham\\
Edgbaston\\
Birmingham B15 2TT\\ United Kingdom
} \email{c.w.parker@bham.ac.uk}

\author{Jason Semeraro}
\address{Heilbronn Institute for Mathematical Research, Department of Mathematics, University of Bristol,  United Kingdom}
\email{js13525@bristol.ac.uk}

\begin{abstract}
We complete the determination of saturated fusion systems on maximal class $3$-groups of rank two.
\end{abstract}

\subjclass[2010]{20D20, 20D05}
\maketitle
\section{Introduction}\label{s:intro}

The maximal class $3$-groups have been classified by Blackburn in \cite{blackburn1958special}. In this article we revisit the determination of the saturated fusion systems on these groups.  We take the presentations for the maximal class $3$-groups from \cite{DiazRuizViruel2007}. For $r \ge 4$, and $\beta, \gamma, \delta \in \{0,1,2\}$, define $$B(r; \beta, \gamma, \delta)=\langle s, s_1,\dots,s_{r-1}\mid \textbf{R1},\textbf{R2}, \textbf{R3}, \textbf{R4}, \textbf{R5}, \textbf{R6}\rangle$$ where the relations are as follows:
\begin{enumerate}
\item[\textbf{R1}:] $s_i=[s_{i-1},s]$ for $i\in\{2, \dots, r-1\}$;
\item [\textbf{R2}:] $[s_1,s_i]=1$ for $i\in\{3, \dots, r-1\}$;
\item [\textbf{R3}:] $s_i^3s_{i+1}^3s_{i+2} = 1$ for $i \in \{2, \dots, r-1\}$ where $s_r=s_{r+1}=1$ by definition;
\item [\textbf{R4}:] $[s_1,s_2]=s_{r-1}^\beta$;
\item [\textbf{R5}:] $s_1^3s_2^3s_3=s_{r-1}^\gamma$; and
\item [\textbf{R6}:] $s^3=s_{r-1}^\delta$.
\end{enumerate}

\noindent We mostly require that $r \ge 5$: note that $|B(r;\beta,\gamma,\delta)|=3^r$ and that there are isomorphisms between some of the groups listed. The full list of maximal class $3$-groups of order at least $3^5$ is uniquely given up to isomorphism by the requirements:
\begin{enumerate}
\item for $r$ odd, $$(\beta,\gamma, \delta) \in \{(1,0,0), (1,0,1), (1,0,2), (0,1,0),  (0,0,1), (0,0,0)\}.$$
\item for $r$ even, $$(\beta,\gamma, \delta) \in \{(1,0,0), (1,0,1), (1,0,2), (0,1,0),  (0,0,1), (0,0,0), (0,2,0)\}.$$
\end{enumerate}
Thus, when $r \ge 5$, there are  six maximal class $3$-groups when $r$ is odd and  seven when $r$ is even.

Recall that for a prime $p$ a saturated fusion system $\F$ on a $p$-group $S$ is \textit{reduced} if and only if $O^p(\F)=O^{p'}(\F)=\F$ and $O_p(\F)=1$. The fusion system $\F$ is \emph{exotic} if $\F \ne \F_S(G)$ for all finite groups $G$ with $S\in \Syl_p(G)$. Our main result is as follows.

\begin{Thm}\label{Main} Suppose that $S= B(r;\beta,\gamma,\delta)$ is a maximal class $3$-group of order at least $3^5$.  Assume that $\F$ is a saturated  fusion system on $S$ and that $\F$ has at least one $\F$-conjugacy class of $\F$-essential subgroups. Then either $\F$ is as described in \cite[Theorem 5.10]{DiazRuizViruel2007} or $\beta \ne 0$ and one of the following holds:
\begin{enumerate}
\item [(i)]$S=B(r;1,0,0)$,  $\langle s, s_{r-1}\rangle $ represents the unique $\F$-conjugacy class of $\F$-essential subgroups, $\Aut_\F(\langle s, s_{r-1}\rangle) \cong \SL_2(3)$, $|\Out_\F(S)|=2$ and either
    \begin{enumerate}
    \item[(a)] $r$ is even and $\F$ is reduced; or
    \item[(b)] $r=2k+1$ is odd, and $O^3(\F)$ is a subsystem of index $3$ in $\F$ isomorphic to the fusion system of $\PSL_3(q)$ at the prime $3$ for some prime power $q$ with $v_3(q-1)=k$.
        \end{enumerate}
\item[(ii)] $S=B(r;1,0,2)$,   $r$ is even and one of the following holds:
\begin{enumerate}
 \item[(a)] $\langle ss_1, s_{r-1}\rangle $  represents the unique  $\F$-conjugacy class of $\F$-essential subgroups, $\Aut_\F(\langle ss_1, s_{r-1}\rangle ) \cong \SL_2(3)$, $|\Out_\F(S)|=2$ and $\F$ is reduced;
 \item [(b)]$\langle ss_1^2, s_{r-1}\rangle $ represents the unique  $\F$-conjugacy class of $\F$-essential subgroups, $\Aut_\F(\langle ss_1^2, s_{r-1}\rangle ) \cong \SL_2(3)$, $|\Out_\F(S)|=2$ and $\F$ is reduced; or
 \item [(c)] there are two  $\F$-conjugacy classes of $\F$-essential subgroups represented by $\langle ss_1, s_{r-1}\rangle $ and $\langle ss_1^2, s_{r-1}\rangle $ with $\Aut_\F(\langle ss_1^2, s_{r-1}\rangle ) \cong \Aut_\F(\langle ss_1, s_{r-1}\rangle ) \cong \SL_2(3)$, $|\Out_\F(S)|=2$ and $\F$ is reduced.
\end{enumerate}\end{enumerate}
Furthermore, the fusion systems listed in (i) and (ii) are exotic.
 \end{Thm}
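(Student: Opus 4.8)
The plan is to reduce the classification to the local data at $S$ and at the essential subgroups via Alperin's fusion theorem, which says $\F$ is generated by $\Aut_\F(S)$ together with the automizers $\Aut_\F(E)$ of representatives $E$ of the $\F$-conjugacy classes of $\F$-essential subgroups; each such $E$ is $\F$-centric, $\F$-radical, and has $\Out_\F(E)$ containing a strongly $3$-embedded subgroup. First I would fix the structure of $S$: set $Z = Z(S) = \langle s_{r-1}\rangle$ of order $3$, let $S_1 = C_S(\gamma_2(S)/\gamma_4(S))$ be the unique characteristic maximal subgroup (which contains $\gamma_2(S) = \langle s_2,\dots,s_{r-1}\rangle$ and $s_1$, so that $s \notin S_1$), and record two facts that drive everything: every $x \in S\setminus S_1$ has $|C_S(x)| = 9$, and the order of such an $x$ is controlled by the power relations R5 and R6, hence by $(\gamma,\delta)$. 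In particular R6 gives $s^3 = s_{r-1}^\delta$, so $s$ has order $3$ exactly when $\delta = 0$, while for $\delta = 2$ the element $s$ has order $9$ and one must instead test $ss_1$ and $ss_1^2$.

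Second, I would pin down the essential subgroups. Since $S$ has maximal class and rank two and $r \ge 5$, a self-centralizing subgroup of order $9$ cannot lie in $S_1$ (elements of $S_1$ have centralizer of order exceeding $9$), so every $\F$-essential $E$ is elementary abelian of order $9$ of the form $E = \langle x, Z\rangle$ with $x \in S\setminus S_1$ and $x^3 = 1$; self-centralization makes $E$ automatically $\F$-centric. Identifying $\Aut(E) = \GL_2(3)$, saturation forces $\Aut_S(E) = N_S(E)/E$ to be a Sylow $3$-subgroup of $\Aut_\F(E)$, hence of order $3$ and $|N_S(E)| = 27$; together with the strongly $3$-embedded and $\F$-radical conditions this forces $\SL_2(3) \le \Aut_\F(E)$. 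The remaining work is bookkeeping over $(\beta,\gamma,\delta)$: using the presentation, determine which of $s$, $ss_1$, $ss_1^2$ has order $3$ and yields $\langle x, Z\rangle$ with $|N_S(E)| = 27$. A direct check shows that for $\beta = 0$ every surviving configuration is already recorded in \cite[Theorem 5.10]{DiazRuizViruel2007}, while the commutator relation R4 with $\beta = 1$ is precisely what lets the relevant $\langle x, Z\rangle$ carry an $\SL_2(3)$-action compatibly; the new candidates are $\langle s, Z\rangle$ when $S = B(r;1,0,0)$ and $\langle ss_1, Z\rangle, \langle ss_1^2, Z\rangle$ when $S = B(r;1,0,2)$.

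Third, I would compute $\Aut(S)$ and hence $\Aut_\F(S)$, constrained by the saturation requirement that $\Inn(S) \trianglelefteq \Aut_\F(S)$ and that $\Aut_\F(S)$ permute the $\F$-classes of essentials; this yields $|\Out_\F(S)| = 2$ and $\Aut_\F(E) \cong \SL_2(3)$ in every surviving case, the extra outer automorphism of $E$ being absorbed into $\Out_\F(S)$ rather than enlarging the automizer to $\GL_2(3)$. The parity of $r$ now enters twice. For $S = B(r;1,0,2)$ a computation of the relevant cubes using R3--R6 shows $ss_1$ and $ss_1^2$ have order $3$ only when $r$ is even, and then whether $\langle ss_1, Z\rangle$ and $\langle ss_1^2, Z\rangle$ are $\F$-conjugate, interchanged by $\Aut_\F(S)$, or independent produces exactly the subcases (ii)(a)--(c). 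For $S = B(r;1,0,0)$ the element $s$ has order $3$ for both parities, and a focal-subgroup computation gives $O^3(\F) = \F$ when $r$ is even (so $\F$ is reduced, case (i)(a)) but $O^3(\F)$ of index $3$ when $r = 2k+1$ is odd; in the latter case I would identify $O^3(\F)$ with the fusion system of $\PSL_3(q)$ at $3$ for $v_3(q-1) = k$ by matching the automizer $\SL_2(3)$ and the local data at $S$ and $E$ with those of $B(2k+1;1,0,0) \in \Syl_3(\PSL_3(q))$, giving case (i)(b). Finally each assembled datum must be shown to be saturated by verifying the Sylow and extension axioms at $S$, $E$ and $N_S(E)$. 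The main obstacle is exactly this consistency check together with the exhaustive exclusion of extraneous candidate essentials and the proof that no two incompatible essential classes can coexist: it is here that \cite{DiazRuizViruel2007} is incomplete, and the delicate point is verifying that the proposed automorphism data genuinely glue into a single saturated system.

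For the final claim, suppose for contradiction $\F = \F_S(G)$ with $S \in \Syl_3(G)$, and reduce to $O_{3'}(G) = 1$. In the reduced cases (i)(a) and (ii)(a)--(c) we have $O_3(\F) = 1$, so $O_3(G) = 1$ and $F^*(G)$ is a direct product of nonabelian simple groups whose $3$-local structure must account for the essential automizer $\SL_2(3)$; this forces $G$ to be almost simple over a simple group $L$ with $S \cap L \in \Syl_3(L)$ of maximal class and rank two inducing the prescribed fusion, and appealing to the classification of finite simple groups together with the known list of simple groups having such Sylow $3$-subgroups shows no such $L$ exists. The case (i)(b) is subtler because there $O^3(\F) \cong \F_S(\PSL_3(q))$ is itself realizable; here one argues that an index-$3$ over-system with $|\Out_\F(S)| = 2$ on the same $S$ could only be realized by an almost simple extension of $\PGL_3(q)$ type whose Sylow $3$-subgroup strictly contains $S$, so that $S$ is Sylow in no group inducing $\F$, whence $\F$ is exotic.
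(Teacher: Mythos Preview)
Your overall architecture---Alperin's theorem, locate the essentials, compute $\Aut(S)$, assemble and check saturation, then exoticness---matches the paper's, but there are two concrete errors that undermine the argument.

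First, your determination of the $\F$-essential subgroups is incomplete. You assert that every $\F$-essential $E$ is elementary abelian of order $9$, but you never establish this: you only argue that \emph{if} $|E|=9$ then $E\not\le S_1$. In fact there are two further possibilities the paper must (and does) exclude. One is $E=\gamma_1(S)=S_1$ itself: when $\beta=0$ this is abelian of rank two and can carry an $\SL_2(3)$- or $\GL_2(3)$-automizer, and the paper shows (Lemma~\ref{gamma1essentail}) that it is essential only when $\beta=0$ and then (Lemma~\ref{gamma1essentialgroups}) only for $r$ odd with $(\beta,\gamma,\delta)\in\{(0,0,0),(0,1,0)\}$. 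The other is $E$ extraspecial of order $27$ and exponent $3$, i.e.\ $E_d=\langle ss_1^d,s_{r-2},s_{r-1}\rangle$; these really are candidate essentials (Lemma~\ref{otheressentails}) and are only eliminated later, in the proofs of Lemmas~\ref{(1,0,0)} and~\ref{(1,0,2)}, by observing that the central involution of $\Out_\F(E_d)\cong\SL_2(3)$ lifts to an automorphism of $S$ which \emph{inverts} $s_{r-1}$, while any such involution must \emph{centralize} $Z(E_d)=\langle s_{r-1}\rangle$. Without this step you have not ruled out $E_d$, and without handling $\gamma_1(S)$ you have not accounted for the $\beta=0$ cases.

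Second, your parity argument for $S=B(r;1,0,2)$ is factually wrong. Equation~(\ref{e:congruence}) gives $(ss_1^d)^3=s_{r-1}^{d^2\beta+\delta+d\gamma}$, so for $(\beta,\gamma,\delta)=(1,0,2)$ one has $(ss_1)^3=(ss_1^2)^3=1$ regardless of the parity of $r$ (see Table~\ref{tableelts3}). The reason $r$ must be even is not the element orders but the structure of $\Aut(S)$: by Proposition~\ref{autogrpsdesc}, when $r$ is odd every automorphism of $B(r;1,0,2)$ has $f=1$, and the nontrivial $2$-element $\theta_{2,0,1,v,w}$ interchanges the cosets $ss_1\gamma_2(S)$ and $ss_1^2\gamma_2(S)$; hence no involution in $\Aut_\F(S)$ normalizes $V_1$ or $V_2$, contradicting Lemma~\ref{restrictautos}. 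Your proposed mechanism for excluding odd $r$ does not work.

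Finally, your saturation and exoticness sketches are substantially vaguer than what the paper does. Saturation is obtained not by checking axioms directly but by realizing $\F$ as $\F_S(G)$ for an amalgamated product $G=P\ast_M S\langle\theta\rangle$ and invoking \cite[Theorem~C]{Semeraro2014}, using that $V_d$ is minimal among $\F$-centric subgroups. Exoticness is handled uniformly (including case~(i)(b)) by exhibiting a strongly $\F$-closed subgroup isomorphic to $B(r-1;0,0,0)$ and applying \cite[Proposition~2.19]{DiazRuizViruel2007} to reduce to the almost simple case; your separate argument for~(i)(b) via Sylow containment in extensions of $\PGL_3(q)$ is not correct as stated, since $O^3(\F)$ lives on a proper subgroup of $S$, not on $S$.
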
  The smallest fusion systems listed in Theorem~\ref{Main} appeared  during the work of the authors to classify all reduced fusion systems on groups of order up to $1000$ by computer \cite{ps2018}. We note that all of them have $\F$-pearls \cite{grazian2018fusion}.

Inspection of the proof of \cite[Theorem 1.1]{DiazRuizViruel2007} reveals two distinct misstatements which lead to  the absence of the fusion systems in parts (i) and (ii) of Theorem~\ref{Main} from their results.  The first is in the assertion that every element of $S\setminus \gamma_1(S)$ has order $9$ in the case that $\delta\ne 0$ (\cite[Proposition A.9 (e)]{DiazRuizViruel2007}). The groups $B(r; 1,0,2)$ disprove this claim (as does \cite[Proposition A.9(a)]{DiazRuizViruel2007}.) This former ``fact" is used in the proof of \cite[Theorem 5.1]{DiazRuizViruel2007} where it is claimed that the groups $B(r;\beta,\gamma,\delta)$ are resistant when $\delta \ne 0$ and this is false.   The second misstatement  is in the description of the automorphism groups of the groups $B(r;\beta,\gamma,0)$ in \cite[Lemma A.14]{DiazRuizViruel2007}. Specifically, the group $B(r;1,\gamma,0)$ has automorphisms with $e=\pm 1$ in the notation of that result whereas it is claimed that $e=1$.  This leads the authors to work under the assumption that $\beta=0$  in the proof of \cite[Theorem 5.8]{DiazRuizViruel2007}. In the case that $\beta=0$ we are convinced that their calculations are accurate.  In this note we determine automorphism groups of $B(r;\beta,\gamma,\delta)$ for all maximal class $3$-groups of order at least $3^5$ in Proposition~\ref{autogrpsdesc}  and we hope this may be of some independent value. We have also included explicit calculations of various other facts that we could perhaps have cited to other sources.  For example, an expression   similar to that given in Proposition \ref{autogrps} (iii) appears in the discussion which precedes \cite[Proposition 3.4]{mazza2008connected}.

The results from \cite{DiazRuizViruel2007} have 26 MathSciNet citations (August 2018). Many of these use facts about the groups $B(r;\beta,\gamma,\delta)$.   For example Mazza \cite{mazza2008connected} uses \cite[Proposition A.9 (a)]{DiazRuizViruel2007} to calculate correctly that there are elements of order 3 in  the group $S=B(r; 1,0,2)$ which are not in $\gamma_1(S)$ (we give an explicit proof of \cite[Proposition A.9 (a)]{DiazRuizViruel2007} in Section \ref{s:the calc}). Other citations use \cite{DiazRuizViruel2007} to present examples of exotic fusion systems or $3$-local compact groups    and so they are not impacted by the existence of further saturated fusion systems.    There are two true uses of the classification of these fusion systems that we know of. The first
is in a paper of Sambale \cite{sambale2013further} where the classification is used to verify a conjecture of Olsson for certain 3-blocks with maximal class defect groups. In an e-mail exchange, Sambale has explained that his result also holds when the new fusion systems are taken into account.
 The second is by Malle,  Navarro and Sambale in \cite{malle2017defects} where they propose upper bounds for the number of modular constituents of the reduction modulo $p$ of a complex irreducible character of a finite group.  As far as we can tell the existence of the fusion systems in (i) and (ii) of Theorem~\ref{Main} do not change the results in \cite{malle2017defects} as they do not have $\F$-essential subgroups of order $27$.

  We assume familiarity with the background needed to work with fusion systems and just refer to \cite{AschbacherKessarOliver2011, CravenTheory} as our sources.

\section{Maximal class $3$-groups of order at least $3^5$}

 Let $S=B(r;\beta,\gamma,\delta)$ be a maximal class $3$-group with $r \ge 5$ as described in Section \ref{s:intro}. We set $$\gamma_1(S):= \langle s_1,\dots, s_{r-1}\rangle$$ and, for $i >1$,  $$\gamma_i(S)= [\gamma_{i-1}(S),S].$$
As $S$ has maximal class, using $\textbf{R2}$ we obtain
\begin{Lem}\label{gammai}
For $1\le i \le r-1$, $\gamma_i(S)= \langle s_i, \dots, s_{r-1}\rangle$ and $S>\gamma_2(S)>\dots> \gamma_{r-1}(S)$ is the lower central series of $S$. In particular, $|\gamma_{r-1}(S)|=|Z(S)|=o(s_{r-1})=3$.\qed
\end{Lem}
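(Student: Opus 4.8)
The plan is to proceed by downward induction on $i$, showing simultaneously that $\gamma_i(S)=\langle s_i,\dots,s_{r-1}\rangle$ and that these are distinct proper terms, exploiting the defining relations together with the maximal class hypothesis. The base case $i=1$ is immediate from the definition $\gamma_1(S)=\langle s_1,\dots,s_{r-1}\rangle$. For the inductive step, suppose $\gamma_{i-1}(S)=\langle s_{i-1},\dots,s_{r-1}\rangle$; then $\gamma_i(S)=[\gamma_{i-1}(S),S]$ is generated by commutators $[s_j,x]$ for $j\ge i-1$ and $x$ ranging over generators of $S$, modulo higher terms. First I would use $\textbf{R1}$, which gives $[s_{i-1},s]=s_i$, to get $s_i\in\gamma_i(S)$. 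The reverse-type containment needs the other generators: $[s_{i-1},s_1]$ lies in $\gamma_i(S)$ but by $\textbf{R4}$ (or $\textbf{R2}$ if $i-1\ge 3$) it is either trivial or a power of $s_{r-1}\in\langle s_i,\dots,s_{r-1}\rangle$, and $[s_j,s]=s_{j+1}$ for $j\ge i$ is already in $\langle s_i,\dots,s_{r-1}\rangle$; the commutators $[s_j,s_1]$ and $[s_j,s_k]$ with $j,k\ge i$ land in even deeper terms, and one closes the loop by a secondary induction or by invoking that $\gamma_i(S)$ is normal and the quotient structure forces containment in $\langle s_i,\dots,s_{r-1}\rangle$.

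Once the identification $\gamma_i(S)=\langle s_i,\dots,s_{r-1}\rangle$ is in hand, the chain $S>\gamma_2(S)>\dots>\gamma_{r-1}(S)$ being \emph{strict} with each quotient of order $3$ follows from the order count $|B(r;\beta,\gamma,\delta)|=3^r$: the series has at most $r$ distinct terms between $S$ and $1$, and since $S$ has maximal class (nilpotency class $r-1$) the lower central series must have exactly length $r-1$, forcing every inclusion to be proper and hence every factor $\gamma_i(S)/\gamma_{i+1}(S)$ to have order exactly $3$. In particular $\gamma_{r-1}(S)=\langle s_{r-1}\rangle$ has order $3$, and since $S$ has maximal class its centre $Z(S)$ coincides with $\gamma_{r-1}(S)$, giving $|Z(S)|=o(s_{r-1})=3$.

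The main obstacle is the bookkeeping in the inductive step: one must check that \emph{all} commutators $[\gamma_{i-1}(S),S]$ — not just $[s_{i-1},s]$ — stay inside $\langle s_i,\dots,s_{r-1}\rangle$, which requires careful use of $\textbf{R2}$, $\textbf{R3}$, $\textbf{R4}$, $\textbf{R5}$, $\textbf{R6}$ to rewrite the awkward commutators $[s_j,s_1]$ and the cubes appearing in $\textbf{R3}$, $\textbf{R5}$, $\textbf{R6}$ as elements of deeper central terms. This is exactly the kind of routine-but-delicate computation the excerpt signals by the phrase ``As $S$ has maximal class, using $\textbf{R2}$'', so I would organise it as a clean simultaneous induction and suppress the verification of individual commutator identities.
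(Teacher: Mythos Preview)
Your argument is correct in substance and fills in the details that the paper omits: the paper simply writes ``As $S$ has maximal class, using $\textbf{R2}$ we obtain'' and appends a \qed, treating the lemma as immediate. So there is no real comparison to make beyond noting that you have supplied an actual proof where the paper gives none.

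Two small points of polish. First, you call it ``downward induction'' but then take $i=1$ as base case and pass from $\gamma_{i-1}(S)$ to $\gamma_i(S)$; that is ordinary upward induction. Second, you list $\textbf{R3}$, $\textbf{R5}$, $\textbf{R6}$ among the relations needed for the bookkeeping, but those govern cubes, not commutators, and play no role in identifying $[\gamma_{i-1}(S),S]$. Only $\textbf{R1}$ (giving $[s_{j},s]=s_{j+1}$), $\textbf{R2}$ (giving $[s_1,s_j]=1$ for $j\ge 3$), and $\textbf{R4}$ (handling $[s_1,s_2]$) are required to see that $N_i:=\langle s_i,\dots,s_{r-1}\rangle$ is normal in $S$ and that $\gamma_{i-1}(S)/N_i$ is central in $S/N_i$, which is the cleanest way to get $\gamma_i(S)\le N_i$. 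The reverse containment is immediate from $\textbf{R1}$ alone. Once you strip out the irrelevant relations the ``routine-but-delicate computation'' becomes genuinely routine.
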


\begin{Lem}\label{gamma1struct}
 Either $\beta=0$ and $\gamma_1(S)$ is abelian or $\beta\ne 0$, $\gamma_1(S)$  has centre $\gamma_3(S)$ and derived group $\gamma_{r-1}(S)$. In particular, $\gamma_2(S)$ is abelian and $\gamma_1(S)= C_S(\gamma_{r-2}(S))=C_S(\gamma_2(S)/\gamma_4(S))$ is characteristic in $S$.
\end{Lem}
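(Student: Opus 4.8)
The plan is to reduce everything to a single computation of the commutators among the generators $s_1,\dots,s_{r-1}$, and then to read off each assertion of the lemma. The central claim is:
\[
[s_i,s_j]=1\qquad\text{whenever }\{i,j\}\neq\{1,2\},
\]
so that $[s_1,s_2]=s_{r-1}^{\beta}$ is the only commutator of generators that can be nontrivial. (Note that this uses only \textbf{R1}, \textbf{R2} and the fact that $\gamma_m(S)=1$ for $m\ge r$.)

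To prove the claim I would show by induction on $k\ge 1$ that $[s_k,s_j]=1$ for all $j\ge 3$; the case $k=1$ is exactly \textbf{R2}. For the inductive step, recall from \textbf{R1} that $s_m^{\,s}=s_ms_{m+1}$ for $1\le m\le r-1$ with the convention $s_r=1$. Conjugating $[s_k,s_j]=1$ by $s$ and expanding with $[xy,z]=[x,z]^y[y,z]$ and $[x,yz]=[x,z][x,y]^z$, the inductive hypothesis (which gives $[s_k,s_j]=[s_k,s_{j+1}]=1$ for $j\ge 3$) collapses the expansion to $[s_{k+1},s_js_{j+1}]=1$, i.e.\ $[s_{k+1},s_{j+1}]=[s_{k+1},s_j]^{-s_{j+1}}$ for all $j\ge 3$. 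Since $[s_{k+1},s_r]=1$, a downward induction on $j$ from $j=r-1$ to $j=3$ forces $[s_{k+1},s_j]=1$ for all $j\ge 3$, finishing the induction. Combining this with $[s_k,s_2]=[s_2,s_k]^{-1}=1$ for $k\ge 3$ gives the claim. Consequently $\gamma_2(S)=\langle s_2,\dots,s_{r-1}\rangle$ is generated by commuting elements, hence is abelian; and since every $[s_i,s_j]$ lies in $\langle s_{r-1}\rangle\le Z(S)$, the subgroup $\gamma_1(S)$ has nilpotency class at most $2$ with $[\gamma_1(S),\gamma_1(S)]=\langle[s_1,s_2]\rangle=\langle s_{r-1}^{\beta}\rangle$.

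If $\beta=0$ this derived group is trivial and $\gamma_1(S)$ is abelian. If $\beta\neq 0$ then $\beta$ is a unit mod $3$, so $[\gamma_1(S),\gamma_1(S)]=\langle s_{r-1}\rangle=\gamma_{r-1}(S)$. For the centre, each $s_j$ with $j\ge 3$ commutes with $s_1$ by \textbf{R2} and lies in the abelian group $\gamma_2(S)$, so $\gamma_3(S)\le Z(\gamma_1(S))$; for the reverse inclusion, using $r\ge 5$ one checks via \textbf{R3} (at $i=2$) and \textbf{R5} that $s_2^{3},s_1^{3}\in\gamma_3(S)$, whence $\gamma_1(S)/\gamma_3(S)\cong C_3\times C_3$ with basis $\bar s_1,\bar s_2$; writing a general element as $x=s_1^{a}s_2^{b}z$ with $z\in\gamma_3(S)$, the class-$2$ commutator calculus gives $[x,s_2]=s_{r-1}^{\beta a}$ and $[x,s_1]=s_{r-1}^{-\beta b}$, so $x\in Z(\gamma_1(S))$ forces $a=b=0$ and $Z(\gamma_1(S))=\gamma_3(S)$.

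For the final ``in particular'' clause, $\gamma_2(S)$ is abelian by the above. The claim also gives $[\gamma_1(S),\gamma_{r-2}(S)]=1$: the commutators $[s_1,s_{r-2}]$ and $[s_1,s_{r-1}]$ vanish by \textbf{R2} (as $r-2\ge 3$), while $[s_i,s_{r-2}]\in\gamma_{i+r-2}(S)=1$ and $[s_i,s_{r-1}]\in\gamma_{i+r-1}(S)=1$ for $i\ge 2$; since $[s_{r-2},s]=s_{r-1}\neq 1$ we have $s\notin C_S(\gamma_{r-2}(S))$, so the maximal subgroup $\gamma_1(S)$ (of index $3$) equals $C_S(\gamma_{r-2}(S))$. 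Likewise $[\gamma_1(S),\gamma_2(S)]\le\gamma_4(S)$ (because $s_{r-1}^{\beta}\in\gamma_{r-1}(S)\le\gamma_4(S)$ as $r\ge 5$, and $[s_i,s_j]\in\gamma_{i+j}(S)\le\gamma_4(S)$ for $i,j\ge 2$), whereas $s_2^{\,s}=s_2s_3\notin s_2\gamma_4(S)$ shows $s$ acts nontrivially on $\gamma_2(S)/\gamma_4(S)$, so $\gamma_1(S)=C_S(\gamma_2(S)/\gamma_4(S))$; as the centraliser of a characteristic section (each of $\gamma_{r-2}(S)$, $\gamma_2(S)$, $\gamma_4(S)$ being characteristic in $S$), $\gamma_1(S)$ is characteristic. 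The substantive step is the commutator claim; the only thing needing care there is the index bookkeeping, so that the exceptional commutator $[s_1,s_2]$ is never fed into the sliding argument.
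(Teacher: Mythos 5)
Your proof is correct, but it takes a genuinely different route from the paper's. You derive everything from the explicit commutator table $[s_i,s_j]=1$ for $\{i,j\}\neq\{1,2\}$, obtained purely from \textbf{R1}, \textbf{R2} and $s_r=1$ by conjugating \textbf{R2} by $s$ and running the sliding/downward induction; your index bookkeeping checks out (the anchor $[s_{k+1},s_r]=1$ and the restriction $j\ge 3$ keep $[s_1,s_2]$ out of the recursion), and the rest is class-$2$ commutator calculus plus the maximality of $\gamma_1(S)$ for the two centraliser identifications. The paper instead argues structurally: it first identifies $C_S(\gamma_{r-2}(S))=\gamma_1(S)$ using that a proper normal subgroup of a maximal class group containing $s_1$ must equal $\gamma_1(S)$, and then runs a downward induction $\gamma_e(S)\le Z(\gamma_1(S))\Rightarrow\gamma_{e-1}(S)\le Z(\gamma_1(S))$ (each step again via \textbf{R2} and the maximal class property) to reach $\gamma_3(S)\le Z(\gamma_1(S))$, from which the dichotomy on $\beta$ and the description of $Z(\gamma_1(S))$ and $\gamma_1(S)'$ are read off. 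The paper's argument is shorter and leans on general facts about maximal class $p$-groups; yours is more computational and self-contained and yields strictly more information (the full list of generator commutators), at the cost of the somewhat delicate double induction. Two cosmetic remarks: your appeal to $[s_i,s_{r-2}]\in\gamma_{i+r-2}(S)$ is redundant once you know $\gamma_2(S)$ is abelian (and with the paper's indexing, where $\gamma_1(S)\neq S$, the inclusion $[\gamma_i,\gamma_j]\le\gamma_{i+j}$ should only be invoked for $i,j\ge 2$); and for $Z(\gamma_1(S))=\gamma_3(S)$ you only need $\gamma_1(S)=\langle s_1,s_2\rangle\gamma_3(S)$, not the precise isomorphism type of the quotient.
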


\begin{proof}  Since $r \ge 5$, \textbf{R2} implies  $s_1$ centralizes $\gamma_{r-2}(S)=\langle s_{r-2},s_{r-1}\rangle$ which is abelian. Since $C_S(\gamma_{r-2}(S))$ is normal in $S$ and $s_1 \in C_S(\gamma_{r-2}(S))$, the fact that $S$ has maximal class implies that $ C_S(\gamma_{r-2}(S))=\gamma_1(S)$. Hence $\gamma_{r-2}(S) \le Z(\gamma_1(S))$. Assume that $\gamma_{e}(S) \le Z(\gamma_1(S))$ for some $3< e \le r-2$. Then  $s_{e-1}$ centralizes $\gamma_{e}(S)$ and so $\gamma_{e-1}(S)$ is abelian and \textbf{R2} implies $s_1$ centralizes $\gamma_{e-1}(S)$.  Since $S$ has maximal class, this implies $C_S(\gamma_{e-1}(S))=\gamma_1(S)$. Thus $\gamma_{e-1}(S)\le Z(\gamma_1(S))$ and we conclude that $\gamma_{3}(S) \le Z(\gamma_1(S))$ by induction.  Now $\gamma_2(S)$ is abelian and so if $s_1$ and $s_2$ commute, then $\gamma_1(S)$ is abelian, whereas if $s_1$ and $s_2$ do not commute, then $Z(\gamma_1(S))= \gamma_3(S)$, $\gamma_2(S)$ is abelian and $\gamma_1(S)'= \gamma_{r-1}(S)$. This proves the claim.
\end{proof}

\begin{Lem}\label{2gen}
We have $\Omega_1(\gamma_1(S))= \langle s_{r-1},s_{r-2}\rangle$. In particular, every subgroup of $\gamma_1(S)$ is $2$-generated.
\end{Lem}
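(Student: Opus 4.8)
The plan is to deduce everything from a single ``cube shifts the lower central series by exactly two steps'' statement: for $1\le i\le r-3$ and $x\in\gamma_i(S)\setminus\gamma_{i+1}(S)$ one has $x^3\in\gamma_{i+2}(S)\setminus\gamma_{i+3}(S)$, and in particular $x^3\ne1$ (here $\gamma_j(S)=1$ for $j\ge r$). Granting this, if $x\in\gamma_1(S)$ satisfies $x^3=1$ then $x$ cannot lie in any $\gamma_i(S)\setminus\gamma_{i+1}(S)$ with $i\le r-3$, so $x\in\gamma_{r-2}(S)=\langle s_{r-1},s_{r-2}\rangle$ (Lemma~\ref{gammai}); as $\Omega_1(\gamma_1(S))$ is generated by such $x$ we get $\Omega_1(\gamma_1(S))\le\langle s_{r-1},s_{r-2}\rangle$. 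The reverse inclusion will fall out of the computation below, once we see that $\langle s_{r-1},s_{r-2}\rangle$ has exponent $3$.

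First I would treat the generators. Relation~$\textbf{R3}$, applied at $i$ and at $i+1$ and read inside the abelian group $\gamma_2(S)$ (Lemma~\ref{gamma1struct}), gives $s_i^{3}=s_{i+2}^{-1}s_{i+1}^{-3}=s_{i+2}^{-1}(s_{i+2}^{3}s_{i+3})=s_{i+2}^{2}s_{i+3}$ for $2\le i\le r-3$, where $s_j=1$ for $j\ge r$; likewise $\textbf{R5}$ together with $\textbf{R3}$ at $i=2$ gives $s_1^{3}=s_{r-1}^{\gamma}s_3^{2}s_4\equiv s_3^{2}\pmod{\gamma_4(S)}$ (using $r\ge5$), and $s_{r-2}^{3}=s_{r-1}^{3}=1$ from $\textbf{R3}$ and Lemma~\ref{gammai}. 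Reading these off: $s_i^3\in\gamma_{i+2}(S)\setminus\gamma_{i+3}(S)$ for all $1\le i\le r-3$. A consequence is that $\gamma_j(S)/\gamma_{j+2}(S)$ is elementary abelian of order $9$ for $1\le j\le r-2$: it is abelian because $[\gamma_j(S),\gamma_j(S)]\le\gamma_{2j}(S)\le\gamma_{j+2}(S)$ for $j\ge2$ while $\gamma_1(S)'\le\gamma_{r-1}(S)\le\gamma_3(S)$ by Lemma~\ref{gamma1struct}; it has order $9$ by Lemma~\ref{gammai}; and it has exponent $3$ since the images of its generators $s_j,s_{j+1}$ have trivial cube or, by the previous computation, cube in $\gamma_{j+2}(S)$. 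Taking $j=r-2$ shows $\langle s_{r-1},s_{r-2}\rangle=\gamma_{r-2}(S)\cong C_3\times C_3$, which supplies the missing inclusion $\langle s_{r-1},s_{r-2}\rangle\le\Omega_1(\gamma_1(S))$.

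Now take a general $x\in\gamma_i(S)\setminus\gamma_{i+1}(S)$ with $1\le i\le r-3$ and write $x=s_i^{a}y$ with $a\in\{1,2\}$ and $y\in\gamma_{i+1}(S)$ (possible since $\gamma_i(S)/\gamma_{i+1}(S)=\langle s_i\rangle$ has order $3$ by Lemma~\ref{gammai}). As $\gamma_1(S)$ has nilpotency class at most $2$ with $\gamma_1(S)'\le Z(S)$ of order at most $3$ (Lemma~\ref{gamma1struct}), the class-two power identity $(uv)^3=u^3v^3[v,u]^3$ (valid when $[v,u]$ is central) gives $x^3=s_i^{3a}y^3[y,s_i^{a}]^3=s_i^{3a}y^3$, as $[y,s_i^{a}]\in\gamma_1(S)'$ has order dividing $3$; for $i\ge2$ this is just commutativity of $\gamma_2(S)$. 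Here $s_i^{3a}=(s_i^3)^a\in\gamma_{i+2}(S)\setminus\gamma_{i+3}(S)$ by the generator case, while $y^3\in\gamma_{i+3}(S)$ because $\gamma_{i+1}(S)/\gamma_{i+3}(S)$ is elementary abelian of order $9$ by the previous paragraph; since $\gamma_{i+2}(S)$ is abelian we conclude $x^3\in\gamma_{i+2}(S)$ and $x^3\gamma_{i+3}(S)=(s_i^3)^a\gamma_{i+3}(S)\ne\gamma_{i+3}(S)$, i.e. $x^3\in\gamma_{i+2}(S)\setminus\gamma_{i+3}(S)$ and $x^3\ne1$. This proves the shift statement, hence $\Omega_1(\gamma_1(S))=\langle s_{r-1},s_{r-2}\rangle$. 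For the final clause, any subgroup $H\le\gamma_1(S)$ has $\Omega_1(H)\le\Omega_1(\gamma_1(S))$ of order at most $9$, so $H$ has no elementary abelian subgroup of rank $3$; since $3$ is odd this forces $H$ to be metacyclic and in particular $2$-generated.

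The main obstacle is the passage from generators to arbitrary elements: one must compute the cube of a product $s_i^{a}y$, which is exactly where ``$\gamma_2(S)$ abelian'' and ``$\gamma_1(S)'$ of order $\le3$'' from Lemma~\ref{gamma1struct} enter, and one must already have pinned down the elementary abelian structure of the two-step quotients $\gamma_{j}(S)/\gamma_{j+2}(S)$ in order to kill $y^3$. The index bookkeeping near $j=r$ (where several $s_j$ collapse to $1$) needs a little care but is routine.
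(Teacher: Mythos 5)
Your determination of $\Omega_1(\gamma_1(S))$ is correct and takes a genuinely different route from the paper: rather than first settling the case where $\gamma_1(S)$ is abelian and then reducing the non-abelian case to it via the quotient $S/\gamma_{r-1}(S)\cong B(r-1;0,0,0)$, you show directly that cubing sends an element of $\gamma_i(S)\setminus\gamma_{i+1}(S)$ into $\gamma_{i+2}(S)\setminus\gamma_{i+3}(S)$ for $1\le i\le r-3$, using the class-two power identity together with the fact that the two-step quotients $\gamma_j(S)/\gamma_{j+2}(S)$ are elementary abelian of order $9$. The bookkeeping, including the boundary cases $i=1$ (via \textbf{R5}) and $i=r-3$ (where $s_{r-3}^3=s_{r-1}^2\ne 1$), checks out, and this part of the argument is clean and self-contained.

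The final clause is where there is a genuine gap. You deduce that every $H\le\gamma_1(S)$ is $2$-generated from ``$H$ has no elementary abelian subgroup of rank $3$, and since $3$ is odd this forces $H$ to be metacyclic.'' That implication is false: for odd $p$ the extraspecial group of order $p^3$ and exponent $p$ has no elementary abelian subgroup of order $p^3$ and is not metacyclic. The weaker implication you actually need --- that for odd $p$ a $p$-group with no elementary abelian subgroup of order $p^3$ has all subgroups $2$-generated --- is a nontrivial theorem of Blackburn whose proof in the $3$-group case runs precisely through groups of maximal class, so invoking it here is circular in spirit and in any case would need a citation and a correct statement. The gap is easy to close from what you have already proved: by Lemma~\ref{gamma1struct} the group $\gamma_1(S)$ has nilpotency class at most $2<3$, so $\gamma_1(S)$ and every subgroup $H\le\gamma_1(S)$ is a regular $3$-group; hence $|H:\langle h^3\mid h\in H\rangle|=|\Omega_1(H)|\le|\langle s_{r-1},s_{r-2}\rangle|=9$ by the first part of the lemma, and since $\Phi(H)$ contains all cubes this gives $|H/\Phi(H)|\le 9$, i.e.\ $H$ is $2$-generated. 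Alternatively, the paper's own argument (pass to the abelian group $\gamma_1(S)/\gamma_{r-1}(S)$, whose subgroups are all $2$-generated, and use $s_{r-3}^3=s_{r-1}^2$ to handle subgroups containing $\gamma_{r-1}(S)$) fills the same gap.
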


\begin{proof}
By  \textbf{R3}, we have $\Omega_1(\gamma_1(S))\ge \langle s_{r-1},s_{r-2}\rangle$. Assume that $\Omega_1(\gamma_1(S)) \ne  \langle s_{r-1},s_{r-2}\rangle$.
 If $\gamma_1(S)$ is abelian, then $\Omega_1(\gamma_1(S))$ has exponent $3$ and, as $\gamma_1(S)$ is characteristic in $S$, $\Omega_1(\gamma_1(S)) \ge \gamma_{r-3}(S)$. However,  \textbf{R3} shows that $s_{r-3}^3= s_{r-1}^2$, a contradiction.  Hence, if $\gamma_1(S)$ is abelian, the result holds. Furthermore, in this case we have that every subgroup of $\gamma_1(S)$ is $2$-generated.

 Suppose that $\gamma_1(S)$ is non-abelian. Then the derived subgroup of $\gamma_1(S) $ is $\gamma_{r-1}(S)$ by Lemma~\ref{gamma1struct}. Notice that $\ov S=S/\gamma_{r-1}(S) \cong B(r-1;0,0,0)$. Thus $\Omega_1(\gamma_1(S))/\gamma_{r-1}(S)\le \langle s_{r-3}\gamma_{r-1}(S),s_{r-2}\gamma_{r-1}(S) \rangle $ by applying the previous case to $\ov S$. This shows that $\Omega_1(\gamma_1(S)) \le \langle s_{r-3},s_{r-2},s_{r-1} \rangle$ and as $r \ge 5$, again we see that $s_{r-3}$ has order $9$ by  \textbf{R3} and we conclude that  $\Omega_1(\gamma_1(S)) = \langle s_{r-1},s_{r-2}\rangle$ in this case also.

Finally, assume that $A \le \gamma_1(S)$ is at least $3$-generated. If $A$ does not contain $\gamma_{r-1}(S)$, then $A$ is isomorphic to a subgroup of $\gamma_1(S)/\gamma_{r-1}(S)$ and so is $2$-generated, a contradiction.  Hence $\gamma_{r-1}(S)\le A$.  Furthermore, $A/\gamma_{r-1}(S)$ is $2$-generated and $\Omega_1(A/\gamma_{r-1}(S))=\langle s_{r-2}, s_{r-3}\rangle \gamma_{r-1}(S)$. Since $s_{r-3}^3 =s_{r-1}^2$, we have $A$ is $2$-generated a contradiction.
\end{proof}

Because of Lemmas \ref{gamma1struct} and \ref{2gen}, we have $\Omega_1(\gamma_1(S)/\Omega_1(S))$ has order $3^2$ when $r \ge 5$ and since $\Omega_i(\gamma_1(S))/\Omega_1(S)=\Omega_{i-1}(\gamma_1(S)/\Omega_1(S))$, by induction $\Omega_i(\gamma_1(S))$ has order at most $3^{2i}$ for each $1 \le i \le \lfloor \frac{r}{2} \rfloor.$ In particular, $\gamma_1(S)$ has exponent at most  $\lfloor \frac{r}{2} \rfloor.$

\begin{Lem}\label{conjugate in coset} If  $x \in S\setminus \gamma_1(S)$, then $C_S(x)= \langle x, s_{r-1}\rangle$ has order $9$ and all the elements of the coset  $x\gamma_2(S)$ are $S$-conjugate.
\end{Lem}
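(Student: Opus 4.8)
The plan is to pin down $C_S(x)$ exactly and then obtain the conjugacy statement by a counting argument. First observe that for every $g\in S$ we have $x^g = x\,[x,g]$ with $[x,g]\in [S,S]=\gamma_2(S)$ by Lemma~\ref{gammai}; hence $x^S\subseteq x\gamma_2(S)$, and since $|x\gamma_2(S)|=|\gamma_2(S)|=3^{r-2}$ (again Lemma~\ref{gammai}) it will be enough to show $|C_S(x)|=9$. For then $|x^S|=|S:C_S(x)|=3^{r-2}=|x\gamma_2(S)|$ forces $x^S=x\gamma_2(S)$, so that all elements of $x\gamma_2(S)$ are $S$-conjugate to $x$ and hence to one another.

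The heart of the matter is the claim $C_S(x)\cap\gamma_1(S)=Z(S)$. For $1\le i\le r-2$, with the convention $\gamma_r(S)=1$, consider the map
$$\phi_i\colon \gamma_i(S)/\gamma_{i+1}(S)\longrightarrow \gamma_{i+1}(S)/\gamma_{i+2}(S),\qquad g\gamma_{i+1}(S)\longmapsto [g,x]\,\gamma_{i+2}(S).$$
Standard commutator identities show $\phi_i$ is a well-defined homomorphism, the error terms $[g,x,h]$ lying in $[\gamma_{i+1}(S),\gamma_i(S)]\le\gamma_{2i+1}(S)\le\gamma_{i+2}(S)$. For surjectivity, write $\gamma_{i+1}(S)=[\gamma_i(S),S]=[\gamma_i(S),\langle x\rangle\gamma_1(S)]$: the contribution of $\gamma_1(S)$ lies in $[\gamma_i(S),\gamma_1(S)]\le\gamma_1(S)'\le\gamma_{r-1}(S)$ by Lemma~\ref{gamma1struct}, which is contained in $\gamma_{i+2}(S)$ when $i\le r-3$, while for $i=r-2$ one checks directly from \textbf{R2} (using $r\ge 5$, so $r-2\ge 3$) that $[\gamma_{r-2}(S),\gamma_1(S)]=1$. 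Thus $\gamma_{i+1}(S)/\gamma_{i+2}(S)$ is generated, modulo $\gamma_{i+2}(S)$, by elements $[g,x]$, so $\phi_i$ is onto; as its domain and codomain both have order $3$ by Lemma~\ref{gammai}, $\phi_i$ is an isomorphism. Consequently, if $1\ne g\in\gamma_1(S)\cap C_S(x)$ and $i$ is maximal with $g\in\gamma_i(S)$, then $i\le r-2$ cannot occur, since it would give $[g,x]\notin\gamma_{i+2}(S)$ and in particular $[g,x]\ne 1$; hence $g\in\gamma_{r-1}(S)=Z(S)$. Together with $Z(S)\le C_S(x)$ this proves $C_S(x)\cap\gamma_1(S)=Z(S)=\langle s_{r-1}\rangle$.

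Finally, since $x\notin\gamma_1(S)$ and $\gamma_1(S)$ has index $3$, we get $C_S(x)\gamma_1(S)=S$, so $C_S(x)/(C_S(x)\cap\gamma_1(S))\cong S/\gamma_1(S)$ has order $3$ and hence $|C_S(x)|=3|Z(S)|=9$. Now $\langle x,s_{r-1}\rangle\le C_S(x)$ because $x$ commutes with itself and with $s_{r-1}\in Z(S)$; this subgroup contains $Z(S)$ and maps onto $S/\gamma_1(S)$, so it already has order at least $9$, forcing $C_S(x)=\langle x,s_{r-1}\rangle$. Combined with the first paragraph this proves the lemma. I expect the one genuinely delicate point to be the surjectivity of $\phi_{r-2}$ at the bottom of the lower central series, where the generic bound $\gamma_1(S)'\le\gamma_{r-1}(S)$ is too weak and one must instead use the explicit relation \textbf{R2}; everywhere else the structural information recorded in Lemmas~\ref{gammai} and \ref{gamma1struct} reduces the computation to formalities.
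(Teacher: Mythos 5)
Your proof is correct. The overall skeleton --- reduce the conjugacy claim to $|C_S(x)|=9$ by counting, and reduce that to $C_S(x)\cap\gamma_1(S)=Z(S)=\langle s_{r-1}\rangle$ --- matches the paper's, but your argument for the key containment takes a genuinely different route. The paper observes that $C_{\gamma_1(S)}(x)$ is normal in $S$ (since it contains $\gamma_{r-1}(S)\geq\gamma_1(S)'$ and is normalized by $x$), so if it properly contained $\gamma_{r-1}(S)$ it would contain $s_{r-2}$; but then $s_{r-2}$ would be centralized by $\langle x,s_1\rangle=S$ via \textbf{R2}, contradicting $Z(S)=\gamma_{r-1}(S)$. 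You instead prove that commutation with $x$ induces isomorphisms $\gamma_i(S)/\gamma_{i+1}(S)\to\gamma_{i+1}(S)/\gamma_{i+2}(S)$ for $1\leq i\leq r-2$, so no nontrivial element of $\gamma_1(S)$ outside $\gamma_{r-1}(S)$ can centralize $x$. The paper's version is shorter but quietly relies on the standard fact that a normal subgroup of a maximal class group properly containing the centre must contain $\gamma_{r-2}(S)$; yours is more computational but self-contained, and you correctly isolate the one delicate point ($i=r-2$, where the bound $[\gamma_i(S),\gamma_1(S)]\leq\gamma_1(S)'\leq\gamma_{r-1}(S)$ is not enough and one must use $[\gamma_{r-2}(S),\gamma_1(S)]=1$ from \textbf{R2}) --- which is in essence the same appearance of \textbf{R2} that produces the paper's contradiction. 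Both arguments are sound; there is nothing to fix.
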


\begin{proof}
Suppose that $x \in S \setminus \gamma_1(S)$. Then $C_S(x)= \langle x \rangle C_{\gamma_1(S)}(x)$. Obviously, $\gamma_{r-1}(S) \le C_{\gamma_1(S)}(x)$ and we know from Lemma~\ref{gamma1struct} that $\gamma_1(S)'\le \gamma_{r-1}(S)$. This means that $C_{\gamma_1(S)}(x)$ is normal in $\langle x,\gamma_1(S)\rangle = S$. Assume that $C_{\gamma_1(S)}(x)> \gamma_{r-1}(S)$. Then $s_{r-2}\in  C_{\gamma_1(S)}(x)$. As $r \ge 5$, $r-2 \ge 3$ and so $s_{r-2}$ is centralized by $\langle x, s_1\rangle = S$, and this contradicts $Z(S)=\gamma_{r-1}(S)$. It follows that $C_S(x)=\langle x, s_{r-1}\rangle$ has order $9$ and that  $|x^S|=3^{r-2}= |x\gamma_2(S)|$. This proves the result.
\end{proof}

 \section{Automorphisms of maximal class $3$-groups}\label{s:the calc}

We continue to assume that $S=B(r;\beta,\gamma,\delta)$ with $r \ge 5$.
We will repeatedly use the commutator formulae $$[xy,z]=[x,z]^y[y,z]=[x,z][x,z,y][y,z]$$ and $$[x,yz]=[x,z][x,y]^z=[x,z][x,y][x,y,z]$$ without mention. In particular, we note the following consequence of these relations in our situation. Suppose that $a,b  \in S$, and $v,w \in \gamma_2(S)$.  Then, as $\gamma_2(S)$ is abelian by Lemma~\ref{gamma1struct}, we have $$[av,bw]=[a,w][a,b][v,b].$$

\begin{Lem}\label{relations} Suppose that $S= B(r;\beta,\gamma, \delta)$, let $d \in \{0,1,2\}$ and $e,f \in \{1,2\}$. The following identities hold.
\begin{itemize}
\item[(i)]
$w^3[w,s]^3[w,s,s]=w^3[w,s^2]^3[w,s^2,s^2]=1$ for all $w \in \gamma_2(S)$.
\item[(ii)] $[[s_1^f,s^e],s_1^d]= s_{r-1}^{2def\beta}$.
\item[(iii)] $(s_1^f)^3[s_1^f,s^e]^3[s_1^f,s^e,s^e]=s_{r-1}^{f\gamma}.$

\end{itemize}

\end{Lem}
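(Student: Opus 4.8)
The plan is to verify each identity by direct computation in $S$, working modulo the appropriate terms of the lower central series and repeatedly applying the relations \textbf{R1}--\textbf{R6} together with the commutator formulae. The guiding principle is that $\gamma_{r-1}(S)=Z(S)$ has order $3$, so the right-hand sides live in a central subgroup and we may compute "exponents modulo $3$"; moreover any commutator $[a,b,c]$ with at least one of the arguments in $\gamma_2(S)$ vanishes as soon as the total weight exceeds $r-1$, which kills most error terms.

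For (i), I would first establish the general fact that in any group one has the Hall--Petrescu style expansion $x^3 = (xy)^3 \cdot (\text{commutator corrections})$; concretely, for $w\in\gamma_2(S)$ and the automorphism-like action of $s$, write $w^{s}=w[w,s]$ and expand $(w\cdot w^{s}\cdot w^{s^2})$ or directly compute $(ws)^3$ — but cleaner is to note that since $\gamma_2(S)$ is abelian (Lemma~\ref{gamma1struct}) and $[w,s]\in\gamma_3(S)$, the triple commutators past weight $r-1$ die, and the claimed relation $w^3[w,s]^3[w,s,s]=1$ is exactly the relation \textbf{R3} propagated: taking $w=s_i$ gives $s_i^3 s_{i+1}^3 s_{i+2}=1$ since $[s_i,s]=s_{i+1}$ and $[s_i,s,s]=s_{i+2}$. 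The point is that both sides are multiplicative in $w$ modulo higher terms — i.e.\ the map $w\mapsto w^3[w,s]^3[w,s,s]$ is a homomorphism from $\gamma_2(S)$ to $\gamma_{r-1}(S)$ once one checks the cross terms vanish — so since it kills the generators $s_2,\dots,s_{r-1}$ of $\gamma_2(S)$ by \textbf{R3} (with $s_r=s_{r+1}=1$), it is trivial. The $s^2$ version follows by applying the $s$-version twice, or symmetrically; the main thing to check is that replacing $s$ by $s^2$ only permutes/squares the relevant commutators and the relation is preserved mod $\gamma_{r-1}(S)$.

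For (ii), the computation starts from \textbf{R4}: $[s_1,s_2]=[s_1,[s_1,s]]=s_{r-1}^\beta$. I would compute $[[s_1^f,s^e],s_1^d]$ by first expanding $[s_1^f,s^e]$ modulo $\gamma_3(S)$ as $s_2^{ef}$ times higher-weight terms — using $[s_1,s]=s_2$, $[s_1^f,s]=s_2^f[s_2,s_1]^{\binom f2}\cdots$ and $[s_1,s^e]=s_2 s_3^{\binom e2}\cdots$ — and then commuting with $s_1^d$; the higher terms lie in $\gamma_4(S)$ or deeper, and commuting those with $s_1\in\gamma_1(S)$ lands in $\gamma_{r-1}(S)$-or-trivial by \textbf{R2} (which says $s_1$ commutes with $s_3,\dots,s_{r-1}$). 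So only the leading term $[s_2^{ef},s_1^d]=[s_2,s_1]^{def}=s_{r-1}^{-def\beta}=s_{r-1}^{2def\beta}$ survives. The bookkeeping of which correction terms are genuinely killed by \textbf{R2} versus merely pushed into $\gamma_{r-1}(S)$ (where they would contribute) is the delicate point and I would do it carefully.

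For (iii), I would use the same homomorphism philosophy as in (i): the map $x\mapsto x^3[x,s^e]^3[x,s^e,s^e]$ restricted to the coset structure, evaluated at $x=s_1^f$. By part (i) applied with $w$ ranging over $\gamma_2(S)$ we already control the $\gamma_2(S)$-part, and \textbf{R5} states precisely $s_1^3 s_2^3 s_3 = s_{r-1}^\gamma$, i.e.\ the $f=1$, $e=1$ case: $(s_1)^3[s_1,s]^3[s_1,s,s]=s_1^3 s_2^3 s_3=s_{r-1}^\gamma$. To get general $e\in\{1,2\}$ and $f\in\{1,2\}$ one expands $(s_1^f)^3$ using the Hall--Petrescu collection formula (cross terms in $\gamma_3(S)$ and below, all of which either vanish or are absorbed using (i) and \textbf{R3}), and similarly for $[s_1^f,s^e]$; the $s^2$-replacement is handled as in (i). I expect the main obstacle throughout to be \emph{sign/exponent bookkeeping modulo $3$}: keeping track of binomial coefficients $\binom f2,\binom e2\in\{0,1\}$ for $f,e\in\{1,2\}$, the inverse $s_{r-1}^{-1}=s_{r-1}^2$, and systematically justifying that every correction term of weight $>r-1$ vanishes while those of weight exactly $r-1$ are accounted for — this is where an off-by-one in the weight count would break the proof, so I would state explicitly at the outset that a commutator of the form $[\gamma_2(S),\underbrace{S,\dots,S}_{k}]$ lies in $\gamma_{k+2}(S)$ and is trivial once $k+2>r-1$, and invoke \textbf{R2} to handle the $s_1$-commutators separately.
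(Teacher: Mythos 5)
Your proposal is correct, and it runs on the same engine as the paper's proof: direct commutator collection from \textbf{R1}--\textbf{R6}, exploiting that $\gamma_2(S)$ is abelian and that $\gamma_3(S)\le Z(\gamma_1(S))$ (Lemma~\ref{gamma1struct}). For (i) your argument is the paper's argument verbatim in different clothing: the paper writes $w$ as a word in $s_2,\dots,s_{r-1}$ and regroups the product into $\prod_j(s_j^3s_{j+1}^3s_{j+2})^{e_j}=1$, which is exactly your ``multiplicative map killed on generators'', and it obtains the $s^2$ case by factoring $w^3[w,s^2]^3[w,s^2,s^2]$ as the product of the $s$-identity for $w$ and the $s$-identity for $[w,s]$, which is your ``apply the $s$-version twice''. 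Where you genuinely diverge is in (ii) and (iii): the paper simply enumerates all four pairs $(e,f)$ and collects explicitly, whereas you reduce (ii) to the leading term of $[s_1^f,s^e]$ modulo $\gamma_3(S)$ and (iii) to the case $f=1$ by a multiplicativity argument. Both reductions are legitimate, and the ``delicate points'' you flag do resolve: in (ii) every correction term to $[s_1^f,s^e]\equiv s_2^{ef}$ lies in $\gamma_3(S)$, which by \textbf{R2} is centralized by $s_1^d$ outright rather than merely pushed into $\gamma_{r-1}(S)$, so nothing extra survives; in (iii) the map $x\mapsto x^3[x,s^e]^3[x,s^e,s^e]$ is multiplicative on $\gamma_1(S)$ because $\gamma_1(S)'\le Z(S)$ has order $3$, and the passage from $e=1$ to $e=2$ is again the regrouping from (i) applied with $w=[s_1,s]$. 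Two small blemishes, neither fatal: $(s_1^f)^3=s_1^{3f}$ needs no Hall--Petrescu expansion --- the genuine issue is $[s_1^2,s^e]$ versus $[s_1,s^e]^2$, which differ by the central element $[s_1,s^e,s_1]$ of order dividing $3$ and hence agree after cubing --- and the correction terms in (ii) can lie in $\gamma_3(S)\setminus\gamma_4(S)$ (e.g.\ $s_3$ in $[s_1,s^2]=s_2^2s_3$), which is harmless but not ``weight $>r-1$''. Your structural shortcuts buy a shorter case analysis; the paper's explicit enumeration buys direct certainty about the exponents, which is the entire content of the lemma, so either write-up is acceptable once the details are filled in.
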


\begin{proof}
Write $w= s_2^{e_2}\dots s_{r-1}^{e_{r-1}}$ for suitable $e_j \in \{0,1,2\}$.
Then, as $\gamma_2(S)$ is abelian, $$[w,s]= \prod_{j=2}^{r-1} [s_j,s]^{e_j} = \prod_{j=2} ^{r-1} s_{j+1}^{e_j}$$ and
 $$[w,s,s]= \prod_{j=2}^{r-1} [s_{j+1},s]^{e_j} = \prod_{j=2} ^{r-1} s_{j+2}^{e_j}.$$
 Therefore

\begin{eqnarray}\label{neweq}w^3[w,s]^3[w,s,s] &=& \prod_{j=2}^{r-1} s_j^{3e_j} \prod_{j=2} ^{r-1} s_{j+1}^{3e_j}\prod_{j=2} ^{r-1} s_{j+2}^{e_j}= \prod_{j=2}^{r-1} s_j^{3e_j}  s_{j+1}^{3e_j} s_{j+2}^{e_j}=\prod_{j=2}^{r-1} (s_j^{3}  s_{j+1}^{3} s_{j+2})^{e_j}=1.\end{eqnarray} Now we calculate
 \begin{eqnarray*}
 [w,s^2]&=&[w,s]^2[w,s,s]
 \end{eqnarray*}
and, by using Equation~\ref{neweq} with $[w,s,s]$ in place of $w$ for the final equality,
  \begin{eqnarray*}
 [w,s^2,s^2]&=&[[w,s]^2[w,s,s],s^2]
\\ &=&[w,s,s^2]^2[w,s,s,s^2]\\
 &=&([w,s,s]^2[w,s,s,s])^2[w,s,s,s]^2[w,s,s,s,s]\\
 &=&[w,s,s]^4[w,s,s,s]^4[w,s,s,s,s]\\
 &=&[w,s,s][w,s,s,s].
 \end{eqnarray*}
We obtain:

\begin{eqnarray*}\label{e:3}
w^3[w,s^2]^3[w,s^2,s^2] &=& w^3([w,s]^6[w,s,s]^3)[w,s,s][w,s,s,s] \\&=&w^3[w,s]^3[w,s,s][w,s]^3[w,s,s]^3[w,s,s,s]\\ &=&1.
\end{eqnarray*}
This proves (i).
We also calculate
$$[[s_1^f,s^e],s_1^d]=\begin{cases}[s_2,s_1^d]=s_{r-1}^{2d\beta}&\mbox{ if } e=f=1;\\
[s_2^2s_{r-1}^{2\beta},s_1^d]=s_{r-1}^{d\beta}&\mbox{ if } e=1, f=2;\\
[s_2^2s_3,s_1^d]=s_{r-1}^{d\beta}&\mbox{ if } e=2, f=1;\\
[s_2^4s_3^4s_4,s_1^d]=[s_2,s_1^d]=s_{r-1}^{2d\beta}&\mbox{ if } e=f=2.
\end{cases}$$
Hence (ii) holds.

For part (iii),  the case $e=f=1$ is immediate using \textbf{R2} and \textbf{R5}.  Suppose that $e=1$ and $f=2$.  Then, as $[s_1,s,s_1] \in \gamma_1(S)' \le \gamma_{r-1}(S)$ which has order $3$, $[s_1,s,s_1]^3=1$ and so
\begin{eqnarray*}(s_1^2)^3 [s_1^2,s]^3[s_1^2,s,s]&=&s_1^6([s_1,s][s_1,s,s_1][s_1,s])^3[[s_1,s][s_1,s,s_1][s_1,s],s]\\
&=&s_1^6[s_1,s]^6[[s_1,s][s_1,s,s_1][s_1,s],s]= s_1^6s_2^6[s_2^2,s]\\
&=&  s_1^6s_2^6s_3^2 = s_1^3s_1^3s_2^3s_3s_2^3s_3= (s_1^3s_2^3s_3)^2= s_{r-1}^{2\gamma}.
\end{eqnarray*}
Now we calculate when $e=2$ and $f=1$,
\begin{eqnarray*}s_1^3 [s_1,s^2]^3[s_1,s^2,s^2]&=&s_1^3(s_2^2s_3)^3[s_2^2s_3,s^2]
= s_1^3s_2^6s_3^3 s_3^4s_4^4s_5  = s_1^3s_2^3s_3=       s_{r-1}^{\gamma}.
\end{eqnarray*}
Finally, assume that $e=f=2$.
Then

\begin{eqnarray*}(s_1^2)^3[s_1^2,s^2]^3[s_1^2,s^2,s^2]&=&s_1^6[s_1^2,s]^6[s_1^2,s,s]^3 [s_1^2,s^2,s^2]\\
&=&  s_1^6[s_1^2,s]^6[s_1^2,s,s]^3 [[s_1^2,s]^2[s_1^2,s,s],s^2]   \\
&=& s_1^6([s_1,s][s_1,s,s_1][s_1,s])^6[s_1^2,s,s]^3 [[s_1^2,s]^2[s_1^2,s,s],s^2]\\
&=& s_1^6s_2^{12}[s_1^2,s,s]^3 [[s_1^2,s]^2[s_1^2,s,s],s^2]\\
&=& s_1^6s_2^{12}[[s_1,s][s_1,s,s_1][s_1,s],s]^3 [[s_1^2,s]^2[s_1^2,s,s],s^2]\\
&=& s_1^6s_2^{12}[s_2^2,s]^3 [[s_1^2,s]^2[s_1^2,s,s],s^2]\\
&=& s_1^6s_2^{12}s_3^6[s_2^4[s_1,s,s_1]^2[s_2^2[s_1,s,s_1],s],s^2]\\
 &=& s_1^6s_2^{12}s_3^6[s_2^4[s_1,s,s_1]^2s_3^2,s^2]\\
  &=& s_1^6s_2^{12}s_3^6[s_2^4s_3^2,s^2]
   =s_1^6s_2^{12}s_3^6[s_2^4,s^2] [s_3^2,s^2]\\
   &=& s_1^6s_2^{12}s_3^6[s_2^4,s]^2[s_2^4,s,s] [s_3^2,s]^2[s_3^2,s,s]\\
    &=& s_1^6s_2^{12}s_3^6s_3^8s_4^4s_4^4 s_5^2
     = s_1^6s_2^{12}s_3^8s_4^2
      = s_1^6s_2^{6}s_3^2 =s_{r-1}^{2\gamma}.\\
\end{eqnarray*}
This establishes (iii).
 \end{proof}

For $v,w,\in \gamma_2(S)$, $d \in \{0,1,2\}$ and $e,f \in \{1,2\}$ define $\theta_{e,d,f,v,w}: S\rightarrow S$  by

\begin{center}
\renewcommand{\arraystretch}{1.4}
\begin{tabular}{llcc}
\multirow{2}{*}{$\theta_{e,d,f,v,w}:$} & $s$ &$\mapsto$ &$s^es_1^dv$ \\
                   & $s_1$&$\mapsto$ & $s_1^fw$.
\end{tabular}
\end{center}
Suppose that $\theta= \theta_{e,d,f,v,w}$ is one of these maps.
 We shall investigate the restrictions on $e,d,f,v$ and $w$ required to ensure that $\theta$ is an automorphism of $S$.

Define $t:= s\theta =s^es_1^dv$ and $t_1:=s_1\theta=s_1^fw$. For $j >1$, set $$ t_j:=[t_{j-1},t].$$
Note that for $k\ge r$, $t_k=1$  and $t_j\in \gamma_j(S)$ for all $j \ge 1$. Since $\gamma_3(S)\le Z(\gamma_1(S))$, $[t_1,t_i]=1$ for $i\in \{3,\dots,r-1\}$. Thus \textbf{R1} and  \textbf{R2} are satisfied.

    Notice that
$$[t_2,t]=[t_2,s^es_1^dv]= [t_2,s^e][t_2,s_1^dv]$$ and $$[t_2,t,t]=[t_2,s^e,s^e].$$ Since $[t_2,s_1^dv] \in \gamma_{r-1}(S)$, $[t_2,s_1^dv]^3=1$. Hence, by   Lemma~\ref{relations}(i),
$$t_2^3t_3^3t_4= t_2^3([t_2,s^e][t_2,s_1^dv])^3[t_2,s^e,s^e]=  t_2^3[t_2,s^e] ^3[t_2,s^e,s^e]=1.$$
Suppose that $j \ge 3$. Then $t_j \in \gamma_3(S)\le Z(\gamma_1(S))$ and so $[t_j,t]= [t_j,s^e]$ and $[t_j,t,t]= [t_j,s^e,s^e]$.  Therefore we can apply Lemma~\ref{relations} again to obtain $$t_j^3t_{j+1}^3t_{j+2}=1.$$
Thus \textbf{R3} is satisfied.

We start to investigate \textbf{R4}. We have $t_1= s_1^fw$, and for some integer $n_2$,
\begin{eqnarray*}t_2&=&[t_1,t]\\&=&[s_1^fw,s^es_1^dv]\\&=&[s_1^f,v][s_1^f,s^es_1^d][w,s^es_1^d]\\&=& [s_1^f,v][s_1^f,s^e][s_1^f,s^e,s_1^d][w,s^es_1^d]\\&=&[s_1^f,v][s_1^f,s^e][s_1^f,s^e,s_1^d][w,s_1^d][w,s^e][w,s^e,s_1^d]\\&=&
[s_1^f,s^e][w,s^e] s_{r-1}^{n_2}.\end{eqnarray*} Therefore,\begin{eqnarray*}\label{t3calc} t_3&=&[t_2,t]\\&=&[[s_1^f,s^e][w,s^e] s_{r-1}^{n_2},s^es_1^dv]\\&=&[[s_1^f,s^e],s_1^dv][[s_1^f,s^e],s^e][[w,s^e] s_{r-1}^{n_2},s^e]\\&=&[[s_1^f,s^e],s_1^dv][[s_1^f,s^e],s^e][w,s^e,s^e]\\&=&[[s_1^f,s^e],s_1^d][[s_1^f,s^e],s^e][w,s^e,s^e].\end{eqnarray*}
Note that  $t_2=[s_1^f,s^e][w,s^e] s_{r-1}^{n_2}=s_2^{ef}g_3$ for some $g_3 \in \gamma_3(S)$. Similarly, $t_3= s_3^{e^2f}g_4$ for some $g_4\in\gamma_4(S)$. Continuing in this manner we see that $t_j= s_j^{e^{j-1}f}g_{j+1}$ for some $g_{j+1}\in\gamma_{j+1}(S)$. In particular,
\begin{equation}\label{tr-1calc} t_{r-1}= s_{r-1}^{e^{r-2}f}. \end{equation}
Again we calculate
\begin{eqnarray*}
[t_1,t_2] &=&[s_1^fw, [s_1^f,s^e][w,s^e] s_{r-1}^{n_2}]= [s_1^fw, [s_1^f,s^e][w,s^e]]\\&=& [s_1^fw, [w,s^e]][s_1^fw, [s_1^f,s^e]][s_1^fw, [s_1^f,s^e],[w,s^e]]\\
&=&[s_1^fw, [w,s^e]][s_1^fw, [s_1^f,s^e]]\\
&=&[s_1^fw, [s_1^f,s^e]]\\
&=&[s_1^f, [s_1^f,s^e]]=s_{r-1}^{ef^2\beta}.\\
\end{eqnarray*}
where the last equality follows from Lemma \ref{relations}(ii) and the fact that $s_{r-1}$ has order $3$.

Therefore, for \textbf{R4} to hold we must have
$$ef\beta \equiv e^{r-2}\beta\pmod 3.$$
Now, recalling that $\gamma_2(S)$ is abelian and $\gamma_{r-1}(S)$ has order 3, we calculate

\begin{eqnarray*}
t_1^3t_2^3t_3&=&(s_1^fw)^3([s_1^f,s^e][w,s^e] s_{r-1}^{n_2})^3[[s_1^f,s^e],s_1^d][[s_1^f,s^e],s^e][w,s^e,s^e]\\
&=&(s_1^fw)^3[s_1^f,s^e]^3[w,s^e] ^3 [[s_1^f,s^e],s^e][w,s^e,s^e]s_{r-1}^{2def\beta}\\
&=&(s_1^f)^3w^3[w,s_1^f]^3[s_1^f,s^e]^3[w,s^e] ^3 [[s_1^f,s^e],s^e][w,s^e,s^e]s_{r-1}^{2def\beta}\\
&=&(s_1^f)^3 [s_1^f,s^e]^3[[s_1^f,s^e],s^e]w^3[w,s^e] ^3 [w,s^e,s^e]s_{r-1}^{2def\beta}\\
&=&(s_1^f)^3 [s_1^f,s^e]^3[[s_1^f,s^e],s^e] s_{r-1}^{2def\beta}\\
&=&  s_{r-1}^{f\gamma+2def\beta},
\end{eqnarray*}
 where the last equality follows from Lemma \ref{relations}(iii).

Therefore for \textbf{R5} to hold we require
$$2de\beta \equiv \gamma (e^{r-2}-1)\pmod 3.$$
We now determine $t^3$ (and this calculation will be used later in a slightly different setting). First of all notice that all the elements of the coset $t\gamma_2(S)$ are $S$-conjugate by Lemma~\ref{conjugate in coset} and $t^3 \in \gamma_1(S) \cap C_S(t)= \gamma_{r-1}(S)$  and so $(t^3)^b= t^3$ for all $b \in S$.  Thus to investigate \textbf{R6}, we may adjust $t$ by conjugacy in $S$ and rather than consider $s^es_1^dw$,  we cube $s^es_1^d$.

\begin{eqnarray*}\label{the calc}
(s^es_1^d)^3&=&s^es_1^ds^es_1^ds^es_1^d\\
&=&(s^e)^2s_1^d[s_1^d,s^e]s^es_1^d[s_1^d,s^e]s_1^d\\
&=&(s^e)^3s_1^d[s_1^d,s^e]^2[s_1^d,s^e,s^e]s_1^d[s_1^d,s^e]s_1^d\\
&=&s_{r-1}^{e\delta} s_1^d[s_1^d,s^e]^2s_1^d[s_1^d,s^e]s_1^d[s_1^d,s^e,s^e]\\
&=&s_{r-1}^{e\delta} s_1^d[s_1^d,s^e]^3 s_1^d  [ s_1^d ,[s_1^d,s^e]]s_1^d [s_1^d,s^e,s^e] \\
&=&s_{r-1}^{e\delta} (s_1^d)^2 [s_1^d,s^e]^3 [s_1^d,s^e,s^e]s_1^d [ s_1^d,[s_1^d,s^e]]\\
&=&s_{r-1}^{e\delta} (s_1^d)^3 [s_1^d,s^e]^3 [s_1^d,s^e,s^e]  [ s_1^d,[s_1^d,s^e]]\\
&=&s_{r-1}^{e\delta}s_{r-1}^{d\gamma}  [ s_1^d,[s_1^d,s^e]]\\
&=&s_{r-1}^{e\delta}s_{r-1}^{d\gamma}s_{r-1}^{ d^2e\beta}= s_{r-1}^{e\delta+d\gamma+e d^2\beta }=s_{r-1}^{e(d^2\beta+\delta)+d\gamma}.
\end{eqnarray*}
 Thus for \textbf{R6} to be satisfied we require
 \begin{equation}\label{e:congruence}
 e(d^2\beta+\delta)+d\gamma\equiv  e^{r-2}f\delta\pmod 3.
 \end{equation}
We have proved
\begin{Prop}\label{autogrps}  The map  $\theta_{e,d,f,v,w}: S\rightarrow S$ is an automorphism of $S=B(r;\beta,\gamma,\delta)$ if and only if the following hold:
\begin{itemize}
\item[(i)]  $ef\beta \equiv e^{r-2}\beta\pmod 3.$
\item[(ii)] $2de\beta \equiv \gamma (e^{r-2}-1)\pmod 3.$
\item[(iii)]  $e(d^2\beta+\delta)+d\gamma\equiv e^{r-2}f\delta\pmod 3$.
\end{itemize}\qed
\end{Prop}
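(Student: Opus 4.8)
The plan is to show that the three congruences (i)--(iii) are not only necessary but also sufficient for $\theta = \theta_{e,d,f,v,w}$ to be an automorphism of $S$. The necessity direction has essentially been carried out in the discussion preceding the statement: the relations \textbf{R1}, \textbf{R2}, \textbf{R3} were verified unconditionally for the images $t = s\theta$, $t_1 = s_1\theta$, and the relations \textbf{R4}, \textbf{R5}, \textbf{R6} were shown to translate precisely into (i), (ii), (iii) respectively via the commutator computations $[t_1,t_2] = s_{r-1}^{ef^2\beta}$, $t_1^3t_2^3t_3 = s_{r-1}^{f\gamma + 2def\beta}$, and $(s^es_1^d)^3 = s_{r-1}^{e(d^2\beta+\delta)+d\gamma}$ together with $t_{r-1} = s_{r-1}^{e^{r-2}f}$. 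So the write-up of necessity is mostly a matter of assembling what is above.

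For sufficiency, I would argue as follows. Since $S$ is generated by $s$ and $s_1$ and we have a presentation $\langle s, s_1, \dots, s_{r-1} \mid \textbf{R1}, \dots, \textbf{R6}\rangle$ in which $s_2, \dots, s_{r-1}$ are iterated commutators of $s_1$ and $s$, the assignment $s \mapsto t$, $s_1 \mapsto t_1$ extends to an endomorphism $\theta$ of $S$ precisely when $t$ and $t_1$ satisfy the defining relations with $t_j := [t_{j-1}, t]$ in place of $s_j$; this is exactly what was checked (\textbf{R1}--\textbf{R3} always, \textbf{R4}--\textbf{R6} under (i)--(iii)). It then remains to see that such an endomorphism is surjective (equivalently, injective, as $S$ is finite). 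For surjectivity it suffices to show $t$ and $t_1$ generate $S$; equivalently, that their images generate $S/\Phi(S)$. Here one uses that $\Phi(S) \ge \gamma_2(S)$, so working modulo $\gamma_2(S)$ we have $t \equiv s^e s_1^d$ and $t_1 \equiv s_1^f$; since $e, f \in \{1,2\}$ are units mod $3$ and $S/\gamma_2(S)$ has order $9$ generated by the images of $s$ and $s_1$, the images of $t, t_1$ still generate $S/\gamma_2(S)$, hence generate $S$ by the Burnside basis theorem. Therefore $\theta$ is an automorphism.

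The main obstacle, and the place requiring the most care, is confirming that the presentation really does let one build an endomorphism from the data $(t, t_1)$ — i.e. that \emph{no} relation beyond \textbf{R1}--\textbf{R6} needs checking and that the auxiliary generators $t_j$ are forced to be the iterated commutators. One has to be careful that \textbf{R3} and \textbf{R5} involve $s_j^3$ and products that mix the ``shape'' of the $t_j$ (recall $t_j = s_j^{e^{j-1}f} g_{j+1}$ with $g_{j+1} \in \gamma_{j+1}(S)$), so the verification of \textbf{R3} for all $j$, not just $j = 2$, must genuinely use Lemma~\ref{relations}(i) in the uniform form proved there, and the verification of \textbf{R5} must track the $\gamma_{r-1}(S)$-contributions exactly — which is why the computation of $t_1^3 t_2^3 t_3$ was done in full above. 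A secondary subtlety is that in deriving \textbf{R6} one is allowed to replace $t = s^e s_1^d v$ by a conjugate and hence cube $s^e s_1^d$ instead; this rests on Lemma~\ref{conjugate in coset} (all elements of $t\gamma_2(S)$ are $S$-conjugate, and $t^3 \in \gamma_{r-1}(S) = Z(S)$), and that reduction should be stated cleanly. Once these points are in place, assembling (i)--(iii) as the exact obstructions and invoking the Burnside basis argument for surjectivity finishes the proof.
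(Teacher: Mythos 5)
Your proposal is correct and follows essentially the same route as the paper, whose proof of Proposition~\ref{autogrps} is exactly the preceding computation: verify \textbf{R1}--\textbf{R3} unconditionally for $t=s\theta$, $t_1=s_1\theta$, $t_j=[t_{j-1},t]$, and show via $[t_1,t_2]=s_{r-1}^{ef^2\beta}$, $t_1^3t_2^3t_3=s_{r-1}^{f\gamma+2def\beta}$, $(s^es_1^d)^3=s_{r-1}^{e(d^2\beta+\delta)+d\gamma}$ and $t_{r-1}=s_{r-1}^{e^{r-2}f}$ that \textbf{R4}--\textbf{R6} are equivalent to (i)--(iii). The only addition you make is to spell out the surjectivity of the resulting endomorphism via the Burnside basis theorem (using $\Phi(S)=\gamma_2(S)$ and $e,f$ invertible mod $3$), a step the paper leaves implicit; this is a worthwhile clarification but not a different argument.
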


\begin{Prop}\label{autogrpsdesc} The following hold:
\begin{itemize}
\item[(i)] Suppose that $r$ is even.
\begin{enumerate}
\item $\Aut(B(r;0,0,0))= \{\theta_{e,d,f,v,w}\mid v,w,\in \gamma_2(S), d \in \{0,1,2\}, e,f \in \{1,2\}\}$ has order $2^2\cdot 3^{2r-3}$.
 \item    $\Aut(B(r;0,1,0))=\{\theta_{e,0,f,v,w}\mid v,w,\in \gamma_2(S),   e,f \in \{1,2\}\}$ has order $2^2\cdot 3^{2r-4}$.
  \item    $\Aut(B(r;0,2,0))=\{\theta_{e,0,f,v,w}\mid v,w,\in \gamma_2(S), e,f \in \{1,2\}\}$ has order $2^2\cdot 3^{2r-4}$.
\item $\Aut(B(r;0,0,1))=\{\theta_{e,d,e,v,w}\mid v,w,\in \gamma_2(S), d \in \{0,1,2\}, e \in \{1,2\}\}$ has order $2\cdot 3^{2r-3}$.
\item $\Aut(B(r;1,0,0))=\{\theta_{e,0,e,v,w}\mid v,w,\in \gamma_2(S),  e \in \{1,2\}\}$ has order $2\cdot 3^{2r-4}$.
\item $\Aut(B(r;1,0,1))=\{\theta_{e,0,e,v,w}\mid v,w,\in \gamma_2(S), e \in \{1,2\}\}$ has order $2\cdot 3^{2r-4}$.
\item $\Aut(B(r;1,0,2))=\{\theta_{e,0,e,v,w}\mid v,w,\in \gamma_2(S), e \in \{1,2\}\}$ has order $2\cdot 3^{2r-4}$.
\end{enumerate}
\item[(ii)] Suppose that $r$ is odd.
\begin{enumerate}
\item $\Aut(B(r;0,0,0))= \{\theta_{e,d,f,v,w}\mid v,w,\in \gamma_2(S), d \in \{0,1,2\}, e,f \in \{1,2\}\}$ has order $4\cdot 3^{2r-3}$.
\item $\Aut(B(r;0,0,1))=\{\theta_{e,d,1,v,w}\mid v,w,\in \gamma_2(S), d \in \{0,1,2\}, e \in \{1,2\}\}$ has order $2\cdot 3^{2r-3}$.
\item $\Aut(B(r;0,1,0))=\{\theta_{1,0,f,v,w}\mid v,w,\in \gamma_2(S), f \in \{1,2\}\}$ has order $2\cdot 3^{2r-4}$.

\item $\Aut(B(r;1,0,0))=\{\theta_{e,0,1,v,w}\mid v,w,\in \gamma_2(S), e \in \{1,2\}\}$ has order $2\cdot 3^{2r-4}$.
\item $\Aut(B(r;1,0,1))=\{\theta_{e,0,1,v,w}\mid v,w,\in \gamma_2(S),  e \in \{1,2\}\}$ has order $2\cdot 3^{2r-4}$.
\item $\Aut(B(r;1,0,2))=\{\theta_{e,0,1,v,w}\mid v,w,\in \gamma_2(S), e \in \{1,2\}\}$ has order $2\cdot 3^{2r-4}$.
\end{enumerate}
\end{itemize}
\end{Prop}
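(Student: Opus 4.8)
The plan is to combine Proposition~\ref{autogrps} with the fact that every automorphism of $S$ has the form $\theta_{e,d,f,v,w}$, and then to solve the three congruences (i)--(iii) of Proposition~\ref{autogrps} explicitly in each of the thirteen listed cases.

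First I would show that $\Aut(S)$ consists exactly of those $\theta_{e,d,f,v,w}$ for which (i)--(iii) of Proposition~\ref{autogrps} hold. By Lemma~\ref{gammai} we have $\gamma_2(S)=[S,S]$, and $s^3\in\gamma_{r-1}(S)$ and $s_1^3\in\gamma_2(S)$ by \textbf{R6} and \textbf{R5} respectively, so $S/\gamma_2(S)$ is elementary abelian of order $9$ with basis the images of $s$ and $s_1$; consequently every element of $S$ has the form $s^as_1^bg$ with $a,b\in\{0,1,2\}$ and $g\in\gamma_2(S)$, and such an element lies outside $\gamma_1(S)$ precisely when $a\neq 0$. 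Now let $\phi\in\Aut(S)$. As $\gamma_1(S)$ is characteristic (Lemma~\ref{gamma1struct}) and $\gamma_2(S)$ is a term of the lower central series, $\phi$ stabilises both; hence $\phi(s)\notin\gamma_1(S)$ forces $\phi(s)=s^es_1^dv$ with $e\in\{1,2\}$, $d\in\{0,1,2\}$, $v\in\gamma_2(S)$, while the automorphism of the cyclic group $\gamma_1(S)/\gamma_2(S)$ of order $3$ induced by $\phi$ forces $\phi(s_1)=s_1^fw$ with $f\in\{1,2\}$, $w\in\gamma_2(S)$. Since $S=\langle s,s_1\rangle$ by \textbf{R1}, $\phi$ is determined by $\phi(s)$ and $\phi(s_1)$, so $\phi=\theta_{e,d,f,v,w}$; Proposition~\ref{autogrps} then forces (i)--(iii), and conversely produces an automorphism from each admissible triple.

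Next I would observe that (i)--(iii) involve only $e,d,f$ and not $v,w$, so $|\Aut(S)|=N\cdot|\gamma_2(S)|^2=N\cdot 3^{2(r-2)}$, where $N$ is the number of triples $(e,d,f)\in\{1,2\}\times\{0,1,2\}\times\{1,2\}$ satisfying them. Since $e^2\equiv 1\pmod 3$, we have $e^{r-2}\equiv 1\pmod 3$ for $r$ even and $e^{r-2}\equiv e\pmod 3$ for $r$ odd, so the congruences simplify: for $r$ even they become $ef\beta\equiv\beta$, $d\beta\equiv 0$, and $e(d^2\beta+\delta)+d\gamma\equiv f\delta\pmod 3$; for $r$ odd (after multiplying through by $e$ where convenient) they become $f\beta\equiv\beta$, $2de\beta\equiv\gamma(e-1)$, and $d^2\beta+\delta+ed\gamma\equiv f\delta\pmod 3$.

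Finally I would run through the seven cases with $r$ even and the six with $r$ odd, in each reading off the constraints on $(e,d,f)$ and counting. When $\beta=0$ the first two congruences are vacuous; when $\beta=1$ the first forces $f=e$ (for $r$ even) or $f=1$ (for $r$ odd) and the second forces $d=0$; the third then pins down the remaining freedom (for instance $e\equiv f$ for $B(r;0,0,1)$ with $r$ even, $d\equiv 0$ for $B(r;0,1,0)$, and all three automatic for $B(r;0,0,0)$). In each case the resulting value of $N$ makes $N\cdot 3^{2(r-2)}$ agree with the stated order, and the solution set is precisely the displayed set of maps $\theta$. This last step is entirely mechanical; the only genuine content is the first paragraph, and the single point there that needs care is that $\gamma_1(S)/\gamma_2(S)$ is cyclic of order $3$ and characteristic, which is what forces $f\in\{1,2\}$ rather than permitting $\phi(s_1)\in\gamma_2(S)$. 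I do not anticipate a serious obstacle.
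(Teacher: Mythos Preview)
Your proposal is correct and follows exactly the same approach as the paper: the paper's own proof is two sentences, observing that $\gamma_1(S)$ is characteristic by Lemma~\ref{gamma1struct} so every automorphism has the form $\theta_{e,d,f,v,w}$, and then invoking Proposition~\ref{autogrps}. You have simply written out in full the case analysis that the paper leaves implicit.
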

\begin{proof} We have $\gamma_1(S)= C_S(\gamma_{r-2}(S))$  is characteristic in $S$ by Lemma~\ref{gamma1struct}. Hence every automorphism of $S$ is of the form $\theta_{e,d,f,v,w}$ for suitable $e,d,f,v$ and $w$. The result now follows from Proposition~\ref{autogrps}.  \end{proof}
\begin{Lem}\label{action on sr-1} We have
$s_{r-1}\theta_{e,d,f,v,w}=  s_{r-1}^{e^{r-2}f}$.
\end{Lem}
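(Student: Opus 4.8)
The plan is to read off the claim directly from the computation already carried out in the lead-up to Proposition~\ref{autogrps}. The key observation is that the formula $s_{r-1}\theta_{e,d,f,v,w} = s_{r-1}^{e^{r-2}f}$ is essentially Equation~\eqref{tr-1calc}, once we identify $s_{r-1}\theta$ with $t_{r-1}$. So the first step is to recall the definitions $t := s\theta = s^e s_1^d v$ and $t_1 := s_1\theta = s_1^f w$, together with $t_j := [t_{j-1}, t]$ for $j > 1$. Since $\theta$ is a homomorphism (indeed an automorphism, though we only need that it preserves commutators), we have $t_j = \gamma_j(S)\theta$-generator images; concretely, $[s_{j-1}, s]\theta = [s_{j-1}\theta, s\theta]$, so by \textbf{R1} and induction $s_j\theta = t_j$ for all $j \in \{2, \dots, r-1\}$. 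In particular $s_{r-1}\theta = t_{r-1}$.

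Next I would invoke the inductive identity $t_j = s_j^{e^{j-1} f} g_{j+1}$ with $g_{j+1} \in \gamma_{j+1}(S)$, which was established just before Equation~\eqref{tr-1calc} by expanding $t_2 = [s_1^f w, s^e s_1^d v] = s_2^{ef} g_3$ with $g_3 \in \gamma_3(S)$ and then iterating $t_{j+1} = [t_j, t] = [s_j^{e^{j-1}f} g_{j+1}, s^e s_1^d v] = s_{j+1}^{e^j f} g_{j+2}$, using that $[s_j, s] = s_{j+1}$ by \textbf{R1} and that the lower central series terms behave as in Lemma~\ref{gammai}. Setting $j = r-1$ and noting $\gamma_r(S) = 1$ so that $g_r = 1$, we get exactly $t_{r-1} = s_{r-1}^{e^{r-2} f}$, which is Equation~\eqref{tr-1calc}. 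Combining with $s_{r-1}\theta = t_{r-1}$ from the previous paragraph finishes the proof.

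There is no real obstacle here: the statement is a bookkeeping corollary of the displayed calculation leading to Equation~\eqref{tr-1calc}, and the only thing to be careful about is the identification $s_j\theta = t_j$, which rests on the fact that $\theta$ (when it is a homomorphism) commutes with the iterated commutator bracket used to define the $s_j$ via \textbf{R1}. If one wants the lemma to hold for \emph{every} choice of parameters (not just those giving automorphisms), one should phrase it as an identity about the abstract assignment $\theta_{e,d,f,v,w}$ viewed through the relation \textbf{R1}; but as the lemma is only applied when $\theta$ is an automorphism (per Proposition~\ref{autogrpsdesc}), the cleanest route is simply to cite Equation~\eqref{tr-1calc} together with the remark, made at the point $t_j$ is introduced, that \textbf{R1}, \textbf{R2}, \textbf{R3} are satisfied by the $t_j$, so that $s_j \mapsto t_j$ is the image of $s_j$ under $\theta_{e,d,f,v,w}$.
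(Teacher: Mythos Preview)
Your proposal is correct and follows essentially the same line as the paper's proof: identify $s_j\theta$ with $t_j$ by induction on $j$ using that $\theta$ preserves commutators (so $s_j\theta=[s_{j-1},s]\theta=[s_{j-1}\theta,s\theta]=[t_{j-1},t]=t_j$), and then read off $s_{r-1}\theta=t_{r-1}=s_{r-1}^{e^{r-2}f}$ from Equation~\eqref{tr-1calc}. Your remark about the lemma only being applied when $\theta$ is an automorphism is appropriate, since the paper's argument likewise relies on $\theta$ being a homomorphism.
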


\begin{proof} Let $\theta=\theta_{e,d,f,v,w}$.   Then, setting $t = s\theta$, $t_1= s_1\theta$ and $t_j=[t_{j-1},t]$ for $j \ge 2$, we have $$t_j=[t_{j-1},t]= [s_{j-1}\theta,s\theta]=[s_{j-1},s]\theta=s_j\theta.$$   Now applying  Equation (\ref{tr-1calc}) yields $s_{r-1}\theta = t_{r-1}= s_{r-1}^{e^{r-2}f}$.
\end{proof}

\section{Fusion systems on maximal class $3$-groups}

Suppose   that $\F$ is a saturated fusion system on $S$, where $S$ is one of the groups $B(r;\beta,\gamma, \delta)$.

\begin{Lem}\label{gamma1essentail} Suppose that $E$ is an $\F$-essential subgroup. If $E \le \gamma_1(S)$, then $E=\gamma_1(S)$ is abelian and $\Aut_\F(E) \cong \SL_2(3)$ or $\GL_2(3)$.
\end{Lem}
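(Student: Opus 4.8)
The plan is to use the general theory of essential subgroups together with the structural facts about $\gamma_1(S)$ established in Section~2. First I would recall that an $\F$-essential subgroup $E$ is $\F$-centric, fully $\F$-normalised and $\F$-radical, and that $\Out_\F(E)$ has a strongly $p$-embedded subgroup; here $p=3$. Since $E \le \gamma_1(S)$, Lemma~\ref{2gen} tells us $E$ is $2$-generated. If $E < \gamma_1(S)$, then $\gamma_1(S)$ centralises a nontrivial subgroup of $Z(E)$ — more precisely, since $\gamma_1(S)$ is either abelian or of class $2$ with derived group $\gamma_{r-1}(S) = Z(S)$ (Lemma~\ref{gamma1struct}), one shows $N_{\gamma_1(S)}(E)/C_{\gamma_1(S)}(E)$ is too small (or $E$ fails to be centric) to allow a strongly $3$-embedded subgroup in $\Out_\F(E)$, forcing $E = \gamma_1(S)$. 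The cleanest route: if $E=\gamma_1(S)$ is non-abelian then by Lemma~\ref{gamma1struct} its derived subgroup is $\gamma_{r-1}(S)=Z(S)$, which is normalised by all of $\Aut_\F(E)$ and lies in $Z(S)$; but an essential subgroup $E$ must satisfy $Z(S) \le E$ and one needs $\Aut_S(E) \in \Syl_3(\Aut_\F(E))$ with $\Out_\F(E)$ having a strongly $3$-embedded subgroup — I would derive a contradiction with the exponent bound ($\gamma_1(S)$ has exponent $\le 3^{\lfloor r/2\rfloor}$ from the remark after Lemma~\ref{2gen}) or with the structure of $\Aut$ of such a group. So $E = \gamma_1(S)$ must be abelian.

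Next, given that $E=\gamma_1(S)$ is abelian, I would pin down the isomorphism type of $A:=\Aut_\F(E)$. By Lemma~\ref{2gen}, $\Omega_1(E) = \langle s_{r-1}, s_{r-2}\rangle \cong C_3 \times C_3$, and this is characteristic in $E$, hence $A$ acts on it, giving a homomorphism $A \to \GL_2(3)$. Its kernel $K$ is a $3$-group (it acts trivially on $\Omega_1(E)$ and stabilises the characteristic chain coming from the $\Omega_i$, so it is unipotent), and since $E$ is centric and radical, $O_3(A) = \Inn_S$-part only, i.e. $A$ is $3$-reduced modulo $\Aut_S(E)$; combined with $\Aut_S(E) \cong S/E$ having order $3$ (as $E = \gamma_1(S)$ has index $3$) and acting nontrivially on $\Omega_1(E)$, one gets that the image of $A$ in $\GL_2(3)$ contains a Sylow $3$-subgroup of $\GL_2(3)$ and has a strongly $3$-embedded subgroup. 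The subgroups of $\GL_2(3)$ with a strongly $3$-embedded subgroup and containing a full Sylow $3$-subgroup are exactly those with $\SL_2(3) \trianglelefteq A_0 \le \GL_2(3)$, i.e. $A_0 \cong \SL_2(3)$ or $\GL_2(3)$; and $K=1$ because $E$ being $\F$-radical forces $O_3(A)=\Aut_S(E)$ of order $3$, which is impossible if the unipotent normal subgroup $K \cdot \Aut_S(E)$ were larger. Hence $A \cong \SL_2(3)$ or $\GL_2(3)$.

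The main obstacle I anticipate is the step ruling out $E$ being a proper subgroup of $\gamma_1(S)$ and, relatedly, showing the kernel $K$ of the action on $\Omega_1(E)$ is trivial: one must argue carefully that an abelian $2$-generated $3$-group $E$ of exponent possibly $> 3$ admits, inside $\F$, no essential configuration unless the "$9$-part" is invisible to the fusion, which is where the precise structure $\Omega_i(E)$ of order $3^{2i}$ and the fact that $\Aut_S(E)$ has order $3$ get used — essentially one shows any automorphism fixing $\Omega_1(E)$ pointwise and normalised appropriately must be inner, via the bound on $|\Aut(E)|_3$ versus $|S/E|=3$ and radicality. I would handle this by induction on the exponent using the $\Omega_i$-filtration, or simply cite that for $E$ abelian $2$-generated, $\Aut(E)$ has a normal Sylow $3$-subgroup acting trivially on $E/3E = \Omega_1(E)$ only when... — more cleanly, I expect the paper to invoke a known lemma that for abelian $p$-groups the essential automiser embeds (after killing $O_p$) into $\GL_2(p)$ when the group is $2$-generated, reducing everything to the classification of strongly $p$-embedded-admitting subgroups of $\GL_2(3)$. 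Once that reduction is in hand, the identification with $\SL_2(3)$ or $\GL_2(3)$ is immediate from the subgroup structure of $\GL_2(3)$.
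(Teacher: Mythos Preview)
Your proposal correctly identifies the overall architecture---show $E=\gamma_1(S)$, then that it is abelian, then pin down $\Aut_\F(E)$---but the two steps you flag as ``obstacles'' really are gaps, and the lines of attack you suggest for them are not the ones that work.

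For ruling out $E<\gamma_1(S)$ when $\gamma_1(S)$ is non-abelian, the paper's argument is short and specific. Centricity gives $E\ge Z(\gamma_1(S))=\gamma_3(S)$ (strictly, else $\gamma_1(S)\le C_S(E)$), so $|\gamma_1(S):E|=3$, $E$ is abelian, and $E\trianglelefteq\gamma_1(S)$. The key point is then this: since $E$ is essential the kernel of $\Aut_\F(E)\to\GL(E/\Phi(E))$ is a normal $3$-subgroup and hence trivial, so $\Aut_{\gamma_1(S)}(E)$ must act nontrivially on $E/\Phi(E)$, i.e.\ $[E,\gamma_1(S)]\not\le\Phi(E)$. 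But $[E,\gamma_1(S)]\le\gamma_1(S)'=\gamma_{r-1}(S)$, forcing $\gamma_{r-1}(S)\not\le\Phi(E)$, and this makes $E$ cyclic, a contradiction. Your appeal to ``$N_{\gamma_1(S)}(E)/C_{\gamma_1(S)}(E)$ is too small'' misses that the decisive constraint is on the action on the \emph{Frattini quotient}, and the decisive structural fact is $\gamma_1(S)'=\gamma_{r-1}(S)$.

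For ruling out the case $E=\gamma_1(S)$ non-abelian, the exponent bound is a red herring. The paper first uses $|E/\Phi(E)|=9$ to get $\Out_\F(E)\cong\SL_2(3)$ or $\GL_2(3)$, then observes that a Sylow $2$-subgroup of $\Aut_\F(E)$ acts faithfully on $\Omega_1(E)=\langle s_{r-1},s_{r-2}\rangle$; its image in $\GL(\Omega_1(E))\cong\GL_2(3)$ therefore contains $Q_8$ and leaves no $1$-dimensional subspace invariant. But if $\gamma_1(S)$ is non-abelian then $E'=\gamma_{r-1}(S)$ is a characteristic subgroup of order $3$ sitting inside $\Omega_1(E)$, which would have to be invariant---contradiction. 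This single observation replaces everything you wrote about exponent bounds and the structure of $\Aut(E)$.

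Your route to $\Aut_\F(E)\cong\SL_2(3)$ or $\GL_2(3)$ via the action on $\Omega_1(E)$ is workable once $E$ is known to be abelian (a $3'$-automorphism of an abelian $3$-group fixing $\Omega_1$ is trivial), but the paper uses $E/\Phi(E)$ instead, where the kernel is a $3$-group immediately by Burnside's basis theorem; this avoids the extra argument you would need and, as seen above, is also exactly what drives the elimination of proper $E$.
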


\begin{proof} If $\gamma_1(S)$ is abelian, then $E=\gamma_1(S)$ and we have nothing to do.  So suppose that $\gamma_1(S)$ is non-abelian. Since $E$ is $\F$-centric, $E > \gamma_3(S)= Z(\gamma_1(S))$. Suppose that $E \ne \gamma_1(S)$. Then $|\gamma_1(S):E|=3$ and $E$ is abelian.  Furthermore, $E$ is normalized by $\gamma_1(S)$. By Lemma~\ref{2gen} $|E/\Phi(E)|= 9$ and $\Omega_1(\gamma_1(S))= \langle s_{r-1}, s_{r-2}\rangle$.  Since $E$ is $\F$-essential, $[E,\gamma_1(S)]\not \le \Phi(E)$ and so we deduce that $\gamma_{r-1}(S)\not \le \Phi(E)$.  It follows that $E$ is cyclic and therefore $E$ is not essential, a contradiction. Hence $E= \gamma_1(S)$.  Since $|E/\Phi(E)|=9$, we now have  $\Out_\F(E) \cong \SL_2(3)$ or $\Out_\F(E) \cong \GL_2(3)$.  Since the Sylow $2$-subgroup of $\Aut_\F(E)$ has to act faithfully on $\Omega_1(E) = \langle s_{r-1}, s_{r-2}\rangle$, we deduce that $\gamma_{r-1}(S) \ne \gamma_1(S)'$ and this is a contradiction.
\end{proof}

\begin{Lem}\label{otheressentails}
Suppose that $E$ is an $\F$-essential subgroup of $S$ and $E \not \le \gamma_1(S)$. The following hold:
\begin{itemize}
\item[(i)] $E\gamma_2(S)/\gamma_2(S)$ has order $3$;
\item[(ii)] $\Out_\F(E) \cong \SL_2(3)$ or $\Out_\F(E) \cong\GL_2(3)$; and
\item[(iii)] $E$ is either extraspecial of order $3^3$ or elementary abelian of order $3^2$.
\end{itemize}
Furthermore, if $F$ is $\F$-essential with $E\gamma_2(S)= F \gamma_2(S)$, then $E$ and $F$ are $S$-conjugate. In particular, $\F$ has at most four $S$-classes of $\F$-essential subgroups.
\end{Lem}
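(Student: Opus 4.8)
The plan is to establish the three displayed properties of an $\F$-essential subgroup $E$ with $E \not\le \gamma_1(S)$, and then the uniqueness-up-to-$S$-conjugacy statement, in that order. For (i), pick $x \in E \setminus \gamma_1(S)$. By Lemma~\ref{conjugate in coset} we have $C_S(x) = \langle x, s_{r-1}\rangle$ of order $9$; since $E$ is $\F$-centric we have $Z(E) \le C_S(x)$ and in fact $E$ centric forces $C_S(E) = Z(E) \le \langle x,s_{r-1}\rangle$, so $Z(E)$ is small. More to the point, $E \cap \gamma_1(S)$ is normalised by $E$ and by $\gamma_1(S)$ (the latter because $\gamma_1(S)$ is abelian or has class $2$ with derived group $\gamma_{r-1}(S)$ of order $3$, which lies in $E$ as $E$ is centric), hence $E\cap\gamma_1(S) \trianglelefteq S$; an index argument then shows $E\gamma_1(S) = S$ would make $E\gamma_2(S)/\gamma_2(S)$ have order $3$ once we know $E \not\supseteq$ a complement of larger index — concretely, $E\gamma_1(S)/\gamma_1(S)$ has order $3$ (it is nontrivial and cyclic of order $3$ since $S/\gamma_1(S)$ is), and I would combine this with the fact that $\gamma_1(S)/\gamma_2(S)$ is itself of order $3$ and $E\cap\gamma_1(S)$ covers it (else $E\cap\gamma_1(S) \le \gamma_2(S)$ and $E$ has order at most $9$ with $E \cap \gamma_1(S) \le Z(S)\cdot(\text{stuff})$, making $C_S(E)$ too big) to pin $E\gamma_2(S)/\gamma_2(S)$ down to order $3$.

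For (iii) and (ii): once (i) holds, $E \le \langle x\rangle\gamma_2(S)$ for some $x \notin \gamma_1(S)$, and $E \cap \gamma_2(S)$ is normal in $S$ and central in $\gamma_1(S)\langle x\rangle = S$ modulo $\gamma_{r-1}(S)$-type corrections; pushing on $Z(S) = \gamma_{r-1}(S)$ being the unique minimal normal subgroup forces $E\cap\gamma_2(S)$ to be very small — of order dividing $9$ — and then $|E| \le 27$. The centric condition $C_S(E) = Z(E)$ rules out $E$ cyclic and rules out $|E|=3$. So $E$ is one of: elementary abelian of order $9$, $C_9$ (excluded, cyclic — not centric since $C_S(E)$ would be order $9 > 3 = |Z(E)|$... actually need $C_9$ centric check), or a nonabelian group of order $27$; since $S$ is a $3$-group of exponent dividing $3^{\lfloor r/2\rfloor}$ and elements of $S\setminus\gamma_1(S)$ can have order $3$ or $9$, the nonabelian case must be the extraspecial group of exponent $3$ (the other extraspecial group $3^{1+2}_-$ has a cyclic subgroup of index $3$ and would force a larger centraliser). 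For (ii), $E$ essential means $\Out_S(E)$ is a nontrivial $3$-subgroup of $\Out_\F(E)$ and $\Out_\F(E)$ has a strongly $3$-embedded subgroup; with $|E/\Phi(E)| = 9$ we get $\Out_\F(E) \le \GL_2(3)$ and the strongly-$3$-embedded condition combined with $O_3(\Out_\F(E))=1$ (from $E$ being radical) forces $\Out_\F(E) \in \{\SL_2(3),\GL_2(3)\}$ by inspecting subgroups of $\GL_2(3)$.

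For the final assertion: suppose $E, F$ are both $\F$-essential with $E\gamma_2(S) = F\gamma_2(S) =: D$. Both $E$ and $F$ have order dividing $27$ by (iii), both contain elements outside $\gamma_1(S)$, and $D/\gamma_2(S)$ has order $3$ by (i) so $|D| = 3|\gamma_2(S)| = 3^{r-1}$. I would argue that $E \cap \gamma_2(S)$ and $F\cap\gamma_2(S)$ are both of the same order (determined by $|E|=|F|$, which follows since both are forced to be extraspecial of order $27$ or both elementary abelian of order $9$ — one can see the isomorphism type is determined by $D$, e.g. by whether $D$ has exponent $3$) and are both normal in $S$; since $\gamma_2(S)$ is abelian with a unique subgroup of each order that is normal in $S$ in the relevant range (its $S$-chief series below $\gamma_2(S)$ via the $\gamma_i$ is essentially unique low down), $E\cap\gamma_2(S) = F\cap\gamma_2(S) =: D_0 \trianglelefteq S$. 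Then $E/D_0$ and $F/D_0$ are both order-$3$ subgroups of $D/D_0$ projecting isomorphically onto $D/\gamma_2(S)$, and an element $x$ with $E = \langle x\rangle D_0$: the coset $x\gamma_2(S) = x(D_0\text{-part})$ — here I invoke Lemma~\ref{conjugate in coset}, which says all elements of $x\gamma_2(S)$ are $S$-conjugate, to conjugate a generator of $F/D_0$ to one of $E/D_0$, hence $F$ to $E$ by that same element (as $D_0$ is normal). Finally, $S/\gamma_2(S)$ has order $9$ so there are at most $4$ subgroups $D$ with $D\gamma_2(S)=D$ of order $3 \cdot |\gamma_2(S)|$, i.e. at most four possibilities for $E\gamma_2(S)$, giving at most four $S$-classes. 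The main obstacle I expect is part (i) — controlling $E \cap \gamma_1(S)$ and ruling out $E\gamma_2(S)/\gamma_2(S)$ having order $9$, which really uses centricity of $E$ together with the class-$2$ structure of $\gamma_1(S)$ from Lemma~\ref{gamma1struct} and the rigidity of $\Omega_1$ from Lemma~\ref{2gen}; everything after that is comparatively mechanical.
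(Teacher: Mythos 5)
Your skeleton ((i), then the structure of $E$, then conjugacy via Lemma~\ref{conjugate in coset}) matches the paper's, but the step that carries the real content of the lemma is missing. In (iii) you bound $|E|$ by asserting that normality of $E\cap\gamma_2(S)$ in $S$, together with $\gamma_{r-1}(S)$ being the unique minimal normal subgroup, forces $|E\cap\gamma_2(S)|$ to divide $9$. That is not a deduction: normality only tells you $E\cap\gamma_2(S)=\gamma_j(S)$ for some $j$, with no upper bound on its order, and nothing you say excludes, say, $E=\langle x\rangle\gamma_{r-3}(S)$ of order $3^4$. The paper's route is: $E=E\gamma_i(S)$ has maximal class (iterating \cite[Lemma 1.2]{VeraLopez}, using that $\gamma_2(S)$ is abelian), whence $|E/\Phi(E)|=9$ and $\Out_\F(E)\in\{\SL_2(3),\GL_2(3)\}$ --- note (ii) is obtained \emph{before} any order bound --- and then, since an essential $E$ forces $\Aut_\F(E)$ to contain a subgroup of order $8$, Proposition~\ref{autogrpsdesc} kills $|E|\geq 3^5$ (those automorphism groups have Sylow $2$-subgroups of order at most $4$), with a separate computation for $|E|=3^4$. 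Some appeal to these automorphism computations, or a genuine substitute, is unavoidable and absent from your proposal; your exclusion of the exponent-$9$ extraspecial group is also only a gesture (the clean reason is that $\Omega_1$ gives a characteristic hyperplane of $E/\Phi(E)$, so $\Out(E)$ cannot contain $\SL_2(3)$).

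Two further steps are wrong as stated. For (i), you claim $E\cap\gamma_1(S)$ covers $\gamma_1(S)/\gamma_2(S)$; if that held, then together with $E\not\le\gamma_1(S)$ it would give $E\gamma_2(S)=S$, i.e.\ $|E\gamma_2(S)/\gamma_2(S)|=9$, the \emph{negation} of (i) (and for the actual essentials $E\cap\gamma_1(S)$ is $\gamma_{r-1}(S)$ or $\gamma_{r-2}(S)$, far inside $\gamma_2(S)$). The correct argument is one line: $\gamma_2(S)=\Phi(S)$, so $E\gamma_2(S)=S$ would force $E=S$. For the conjugacy statement, your claim that $E\gamma_2(S)=F\gamma_2(S)$ forces $|E|=|F|$ because ``the isomorphism type is determined by $D$'' is false: $V_0$ and $E_0$ have orders $9$ and $27$, lie in the same coset $\langle s\rangle\gamma_2(S)$, and both have exponent $3$. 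The mixed case must be handled; the paper does so by first showing no essential properly contains another ($\Aut_\F(F)$ is transitive on the maximal subgroups of $F$, so an essential $E<F$ would be $\Aut_\F(F)$-conjugate to $\gamma_{r-2}(S)$, contradicting that $E$ is fully normalized), and then the conjugation argument, which lands on $E\le F$, yields $E=F$. Your use of Lemma~\ref{conjugate in coset} to align generators, and the count of at most four classes from the maximal subgroups of $S$, are fine once these points are repaired.
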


\begin{proof}  Since $\Phi(S)=\gamma_2(S)$, $E \not \le \gamma_1(S)$ and $E \ne S$, we have $|E\gamma_2(S)/\gamma_2(S)|=3$ which is (i).  Using \cite[Lemma 1.2]{VeraLopez} with Lemma~\ref{gamma1struct} yields that $E\gamma_2(S)$ has maximal class and we know $\gamma_2(S)$ is abelian. We may repeat this argument until we obtain  $E= E\gamma_i(S)$ for some $i \ge 2$ has maximal class. In particular, $|E/\Phi(E)|=9$ and so, as $E$ is $\F$-essential,  either $\Out_\F(E) \cong \GL_2(3)$ or $\Out_\F(E) \cong \SL_2(3)$.  This proves (ii).  Suppose that $|E| \ge 3^4$. We obtain a contradiction by showing that $\Aut_\F(E)$ does not possess a subgroup of order $8$. This follows immediately from Proposition~\ref{autogrpsdesc} when $|E| \ge 3^5$. The case $|E|=3^4$ is a straightforward computation (for example using {\sc Magma} \cite{bosma1997magma}). Hence  $|E| \in \{3^2,3^3\}$  and  $E$ has exponent $3$. This demonstrates   (iii) holds.

Notice that $E \ge Z(S)= \gamma_{r-1}(S) \ge \gamma_1(S)'$  and so $E \cap\gamma_1(S) $ is normalized by $S$. Therefore, as $\Omega_1(\gamma_1(S))=\langle s_{r-2}, s_{r-1}\rangle$ by Lemma~\ref{2gen},  $$E\cap \gamma_1(S)= \begin{cases} \gamma_{r-1}(S) & \mbox{ if $|E|=9$; and} \\  \gamma_{r-2}(S) & \mbox{ if $|E| = 27.$}\end{cases}$$ Suppose $F$ is  an $\F$-essential subgroup with $E < F$. Then $E$ is elementary abelian of order $9$ and $F$ is extraspecial of order $27$.  Since $\Aut_\F(F)$ acts transitively on the maximal subgroups of $F$, $E$ is $\Aut_\F(F)$-conjugate to $\gamma_{r-2}(S)$. This contradicts the fact that $E$ is fully $\F$-normalized and we conclude that there is no such containment.

 Suppose that $E\gamma_2(S)= F\gamma_2(S)$. We may assume that $E \cap \gamma_1(S) \le F \cap \gamma_1(S)$. Let $x \in E\setminus \gamma_2(S)$ and $y \in F \setminus \gamma_2(S)$ be such that $x\gamma_2(S)= y\gamma_2(S)$.  Then by Lemma~\ref{conjugate in coset}, $x$ and $y$ are $S$-conjugate. Hence we may suppose that $x \in E \cap F$.  Then $E= \langle x \rangle (E \cap \gamma_2(S))$ and $F=  \langle x \rangle (F \cap \gamma_2(S))$. Hence $E \le F$ and thus $E=F$ as claimed.
\end{proof}

For $x \in S \setminus  \gamma_1(S)$ to be contained in an elementary abelian subgroup of order $9$ or an extraspecial subgroup of order $27$, it suffices that $x$ has order $3$ since then $\langle x\rangle  \gamma_{r-1}(S)$
and $\langle x\rangle  \gamma_{r-2}(S)$ are such subgroups. Using Equation (\ref{e:congruence}) we see that $ss_1^d$ has order $3$ if and only if   $$d^2\beta+\delta +d\gamma \equiv 0 \pmod 3.$$ Table \ref{tableelts3} lists the groups $B(r;\beta,\gamma,\delta)$ and cosets $ss_1^d\gamma_2(S)$ which consist of elements of order $3$.

\begin{table}[H]
\renewcommand{\arraystretch}{1.4}

\begin{tabular}{|cccc||cccc|}
\hline
$S$&$s\gamma_2(S)$& $ss_1\gamma_2(S)$&$ss_1^2\gamma_2(S)$&$S$&$s\gamma_2(S)$& $ss_1\gamma_2(S)$&$ss_1^2\gamma_2(S)$\\
\hline
 $B(r;0,0,0)$, $r$ even&\checkmark&\checkmark&\checkmark& $B(r;0,0,0)$, $r$ odd&\checkmark&\checkmark&\checkmark\\
$B(r;0,1,0)$, $r$ even & \checkmark&& &$B(r;0,1,0)$, $r$ odd&\checkmark& & \\
$B(r;0,2,0)$, $r$ even & \checkmark&& &$B(r;0,0,1)$, $r$ odd& & &  \\
 $B(r;0,0,1)$, $r$ even & &&&$B(r;1,0,0)$, $r$ odd&\checkmark& & \\
 $B(r;1,0,0)$, $r$ even &\checkmark& & &  $B(r;1,0,1)$, $r$ odd& & & \\
$B(r;1,0,1)$, $r$ even & & & &$B(r;1,0,2)$, $r$ odd& &\checkmark&\checkmark \\
$B(r;1,0,2)$, $r$ even & &\checkmark&\checkmark&&&&\\
\hline
\end{tabular}\caption{Elements of order $3$ in the designated cosets of $S=B(r;\beta,\gamma,\delta)$}
\label{tableelts3}
\end{table}

At this stage, we can confirm that if $\beta=0$, then all the potential fusion systems have been discovered by   \cite[Theorem~5.10]{DiazRuizViruel2007}.

 \begin{Lem}\label{restrictautos}Suppose that $E$ is $\F$-essential. Then $$N_{\Aut_\F(E)} (\Aut_{S}(E))= \{\beta|_{E} \mid \beta \in \Aut_\F(N_S(E)), E\beta= E\}=\{\alpha|_E\mid \alpha \in \Aut_\F(S), E\alpha =E\}.$$
 \end{Lem}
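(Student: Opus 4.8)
The plan is to prove the two claimed equalities separately, using the standard Sylow-type axioms for the normalizer maps in a saturated fusion system together with the maximal-class structure established earlier. Write $N = N_S(E)$. Since $E$ is $\F$-essential it is in particular fully $\F$-normalized, so the extension/Sylow axiom applies at $E$.

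\textbf{First equality.} For the inclusion $\{\beta|_E \mid \beta \in \Aut_\F(N),\ E\beta = E\} \subseteq N_{\Aut_\F(E)}(\Aut_S(E))$, observe that if $\beta \in \Aut_\F(N)$ stabilizes $E$ then $\beta$ conjugates $\Aut_N(E)$ to itself; and since $E$ is $\F$-essential it is $\F$-centric, hence $C_S(E) \le E$ and $N/C_N(E) \cong \Aut_N(E) = \Aut_S(E)$ (using that $E \lhd N$ with $N_S(E) = N$, so $\Aut_N(E)$ is exactly $\Aut_S(E)$ as a subgroup of $\Aut_\F(E)$ because $N$ is a Sylow-$3$ of its own normalizer data here — more precisely $\Aut_S(E) = \Aut_{N_S(E)}(E) = \Aut_N(E)$). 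So $\beta|_E$ normalizes $\Aut_S(E)$. For the reverse inclusion, take $\psi \in N_{\Aut_\F(E)}(\Aut_S(E))$. Then $\psi$ normalizes $\Aut_S(E)$, which by the saturation axiom (applied to the fully normalized $E$) means $\psi$ extends to some $\bar\psi \in \Hom_\F(N_\psi, S)$ with $N_\psi = N$ (since $N_\psi \supseteq \{g \in N_S(E) : \psi c_g \psi^{-1} \in \Aut_S(E)\} = N_S(E) = N$). As $|N\bar\psi| = |N|$ and fully normalized subgroups receive such extensions, $\bar\psi \in \Aut_\F(N)$, and it stabilizes $E$ because $E$ is characteristic in $N$: indeed by Lemma~\ref{gamma1struct}, $\gamma_1(S)$ is characteristic in $S$, so $\gamma_1(S) \cap N$ is characteristic in $N$, and one checks $E$ is characteristic in $N$ from the essential-subgroup structure (either $E = \gamma_1(S)$, or $N = N_S(E)$ has $E$ as its unique subgroup of its isomorphism type and index with the right intersection with $\gamma_1(S)$, using Lemmas~\ref{2gen} and~\ref{otheressentails}). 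This gives $\psi = \bar\psi|_E$ with $\bar\psi \in \Aut_\F(N)$ stabilizing $E$.

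\textbf{Second equality.} The inclusion $\{\alpha|_E \mid \alpha \in \Aut_\F(S),\ E\alpha = E\} \subseteq \{\beta|_E \mid \beta \in \Aut_\F(N),\ E\beta = E\}$ is immediate once we know $N$ is characteristic in $S$: then any $\alpha \in \Aut_\F(S)$ stabilizing $E$ restricts to an element of $\Aut_\F(N)$ stabilizing $E$. If $E = \gamma_1(S)$ then $N = S$ and there is nothing to prove; otherwise $N = N_S(E)$ is a proper subgroup of $S$ containing $\Phi(S) = \gamma_2(S)$ (since $E \not\le \gamma_1(S)$ forces $E\gamma_2(S)/\gamma_2(S)$ of order $3$ by Lemma~\ref{otheressentails}(i), so $N \supseteq E\gamma_2(S)$ has index $3$ in $S$), hence $N$ is one of the maximal subgroups of $S$; a proper nontrivial characteristic subgroup argument, or directly the fact that $\gamma_1(S)$ is the unique maximal subgroup that is not of maximal class (Lemma~\ref{gamma1struct}) while $N$ has maximal class (Lemma~\ref{otheressentails} proof, via \cite[Lemma 1.2]{VeraLopez}), shows $N \ne \gamma_1(S)$ is still \emph{not} obviously characteristic — so instead we get the inclusion the other way: for the reverse inclusion $\{\beta|_E\} \subseteq \{\alpha|_E\}$, take $\beta \in \Aut_\F(N)$ with $E\beta = E$; since $\Aut_S(N) = \Inn(N)$ (as $N$ has index $3$, it is normal in $S$, and $N_S(N)/C_S(N) \cdot \Inn(N)$ — actually $N \lhd S$ with $C_S(N) \le Z(N) \le N$ gives room) we may, after composing with an inner automorphism, assume $\beta$ normalizes $\Aut_S(N)$, and then saturation applied to the fully normalized $N$ extends $\beta$ to $\alpha \in \Aut_\F(S)$; since $E\beta = E$ and $\beta = \alpha|_N$ we get $E\alpha = E$ and $\alpha|_E = \beta|_E$.

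\textbf{Main obstacle.} The delicate point is the claim that $E$ (resp.\ $N_S(E)$) is characteristic in $N_S(E)$ (resp.\ that we can control $\Aut_S(N_S(E))$), which is what makes the extended automorphisms actually stabilize $E$ rather than move it within $N$. This I would handle case-by-case using the classification of essential subgroups from Lemma~\ref{otheressentails}: when $E \le \gamma_1(S)$ we have $N = S$ and $E = \gamma_1(S)$ is characteristic by Lemma~\ref{gamma1struct}; when $E \not\le \gamma_1(S)$, $E$ has order $9$ or $27$, $E \cap \gamma_1(S) \in \{\gamma_{r-1}(S), \gamma_{r-2}(S)\}$ is characteristic in $S$ hence in $N$, and $E/(E\cap\gamma_1(S))$ is the unique order-$3$ complement-type piece determined by the coset $E\gamma_2(S)$, so that any automorphism of $N$ stabilizing $\gamma_1(S)\cap N$ and inducing the identity on $N/\gamma_1(S)$-type data fixes $E\gamma_2(S)$ and hence $E$ up to $\gamma_2(S)$-conjugacy — and then Lemma~\ref{conjugate in coset} lets us adjust by an inner automorphism of $N$ to land back on $E$ exactly. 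Assembling these adjustments cleanly, so that the two descriptions of $N_{\Aut_\F(E)}(\Aut_S(E))$ match on the nose, is the only real work; everything else is the verbatim application of the saturation axioms.
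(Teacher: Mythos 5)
Your treatment of the first equality is essentially sound: applying the extension axiom to $\psi\in N_{\Aut_\F(E)}(\Aut_S(E))$ (for which indeed $N_\psi=N_S(E)$) and noting that the resulting extension carries $N_S(E)$ onto $N_S(E\psi)=N_S(E)$ gives the reverse inclusion; the paper reaches the same point by identifying $N_{\Aut_\F(E)}(\Aut_S(E))$ with the subgroup $H_E$ of automorphisms extending to strictly larger subgroups, using that $\Out_\F(E)\cong \SL_2(3)$ or $\GL_2(3)$, so the two routes are minor variants of each other. However, your appeal to ``$E$ is characteristic in $N_S(E)$'' is both unnecessary (the extension $\bar\psi$ satisfies $E\bar\psi=E\psi=E$ simply because $\bar\psi|_E=\psi$) and false: for $E=V_0=\langle s,s_{r-1}\rangle$ one has $N_S(E)=E_0$, extraspecial of order $27$ and exponent $3$, whose automorphism group permutes its four maximal subgroups transitively, so $V_0$ is certainly not characteristic in $N_S(V_0)$. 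The same false premise drives your ``main obstacle'' paragraph, which is therefore aimed at a non-problem. Similarly, the easy inclusion of the second equality needs no characteristic-subgroup input at all: $E\alpha=E$ already forces $N_S(E)\alpha=N_S(E)$.

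The genuine gap is in the reverse inclusion of the second equality, $\{\beta|_E\}\subseteq\{\alpha|_E\}$. You propose to extend $\beta\in\Aut_\F(N_S(E))$ to $S$ via the extension axiom ``after composing with an inner automorphism'' so that $\beta$ normalizes $\Aut_S(N_S(E))$. There is no reason the element conjugating $\beta\Aut_S(N_S(E))\beta^{-1}$ back to $\Aut_S(N_S(E))$ can be chosen in $\Inn(N_S(E))$; and, worse, any such adjustment $\chi\beta$ changes the restriction to $E$ and need not stabilize $E$, so even when $\chi\beta$ does extend to $S$ you have not produced an $\alpha\in\Aut_\F(S)$ with $E\alpha=E$ and $\alpha|_E=\beta|_E$. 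The ingredient you are missing is the structural fact, established in Lemmas~\ref{gamma1essentail} and~\ref{otheressentails}, that no $\F$-essential subgroup properly contains $E$; since $N_S(E)>E$, no essential subgroup contains $N_S(E)$, and Alperin's fusion theorem \cite[Theorem I.3.5]{AschbacherKessarOliver2011} then forces every element of $\Aut_\F(N_S(E))$ to be, on the nose, the restriction of an element of $\Aut_\F(S)$. That is the paper's one-line argument for the second equality, and without it (or some substitute) your proof of that inclusion does not close.
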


\begin{proof}   Recall the definition of $H_E$ from \cite[Proposition I.3.3]{AschbacherKessarOliver2011}. Then, as $E$ is $\F$-essential, $H_E/\Inn(E)$ is strongly $p$-embedded in $\Out_\F(E)$. Since, by Lemmas~\ref{gamma1essentail} and \ref{otheressentails} (ii),$ \Out_\F(E) \cong \SL_2(3)$ or $\GL_2(3)$, $H_E= N_{\Aut_\F(E)} (\Aut_{S}(E))$. Let $\theta$ be a generator of $H_E$. Then, by definition, there exists $R >E$ and $\psi \in \Hom_\F(R,S)$ with $E\psi = E$ such that $\theta=\psi|_E$. Thus $\psi|_{N_R(E)}\in \Hom(N_R(E), S)$. Since $|N_S(E)/E|=3$, $N_R(E)= N_S(E)$ and so $\psi|_{N_S(E)}\in \Aut_\F(N_S(E))$ and $\theta=\psi|_E$. It follows that $$H_E= \{\psi |_E\mid \psi \in \Aut_\F(N_S(E)), E\psi= E\}$$ and so the first equality holds.

By Lemmas~\ref{gamma1essentail} and \ref{otheressentails}, no $\F$-essential subgroup properly contains $E$, so   every element of $\Aut_\F(N_S(E))$ is the restriction of an element of $\Aut_\F(S)$ by Alperin's Theorem \cite[Theorem I.3.5]{AschbacherKessarOliver2011}. This provides the second asserted equality.
\end{proof}

 \begin{Lem}\label{gamma1essentialgroups} Assume that $\gamma_1(S)$ is $\F$-essential.
Then $r$ is odd and $(\beta, \gamma,\delta) \in \{(0,0,0),(0,1,0)  \}$.
 \end{Lem}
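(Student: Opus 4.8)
The plan is to analyze the constraints that $\gamma_1(S)$ being $\F$-essential places on the parameters $(\beta,\gamma,\delta)$ and the parity of $r$. By Lemma~\ref{gamma1essentail}, if $\gamma_1(S)$ is $\F$-essential then $\gamma_1(S)$ is abelian, so by Lemma~\ref{gamma1struct} we must have $\beta = 0$. This already restricts us to the cases $(0,\gamma,\delta)$. It remains to rule out the cases with $\delta \ne 0$ and, among those with $\delta = 0$, to pin down the parity of $r$ and eliminate $(0,2,0)$.

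The key tool is Lemma~\ref{restrictautos}: since $\gamma_1(S)$ is $\F$-essential, $N_{\Aut_\F(\gamma_1(S))}(\Aut_S(\gamma_1(S)))$ consists precisely of the restrictions to $\gamma_1(S)$ of those elements of $\Aut_\F(S)$ fixing $\gamma_1(S)$ — and since $\gamma_1(S)$ is characteristic (Lemma~\ref{gamma1struct}), that is all of $\Aut_\F(S)$. By Lemma~\ref{gamma1essentail}, $\Out_\F(\gamma_1(S)) \cong \SL_2(3)$ or $\GL_2(3)$, and in either case the normalizer of a Sylow $3$-subgroup has order divisible by $4$ (it contains an element of order $4$, or at least an element inverting the Sylow $3$-subgroup together with an element of order $2$ centralizing part of it). So $\Aut_\F(S)$ must contain an automorphism whose image in $\Out_\F(\gamma_1(S))$ has even order, i.e. acts as $-1$ on $\gamma_1(S)/\gamma_2(S) \cong (\mathbb{Z}/3)^2$ or on a suitable quotient. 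Concretely, I need an automorphism $\theta_{e,d,f,v,w}$ of $S$ with $e = -1 \equiv 2$ (so that $s \mapsto s^{-1}\cdots$ inverts the relevant coset), and then check via Lemma~\ref{action on sr-1} that $s_{r-1} \mapsto s_{r-1}^{e^{r-2}f}$, which must equal $s_{r-1}^{-1}$ for the $2$-element to act as $-1$ on all of $\Omega_1(\gamma_1(S)) = \langle s_{r-2}, s_{r-1}\rangle$ as required in the proof of Lemma~\ref{gamma1essentail} (faithful action of the Sylow $2$-subgroup of $\Aut_\F(\gamma_1(S))$ on $\Omega_1$).

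Thus I would consult Proposition~\ref{autogrpsdesc} with $\beta = 0$: for $(0,\gamma,\delta)$ I read off exactly which $\theta_{e,d,f,v,w}$ are automorphisms. The requirement is that there exist an automorphism with $e = 2$ and with $e^{r-2} f \equiv -1 \equiv 2 \pmod 3$, i.e. $f \equiv 2 \cdot (2^{r-2})^{-1}$. When $r$ is even, $r - 2$ is even so $2^{r-2} \equiv 1$, forcing $f = 2 = e$; when $r$ is odd, $r-2$ is odd so $2^{r-2} \equiv 2$, forcing $f = 1$. Now I match against Proposition~\ref{autogrpsdesc}: for $(0,0,1)$ the automorphisms have $f = e$ in both parities, so when $r$ is odd there is no automorphism with $e = 2, f = 1$ — this kills $(0,0,1)$; for $r$ even, $(0,0,1)$ fails already because (by Lemma~\ref{gamma1essentail}'s argument applied more carefully, or because) the relevant $2$-element constraint combined with $\delta \ne 0$ fails — I need to double-check $(0,0,1)$ for even $r$, but in fact Proposition~\ref{autogrpsdesc}(i)(4) allows $\theta_{2,d,2,v,w}$, so I must find another obstruction for even $r$ with $\delta = 1$. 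Here the cleaner route: re-examine the end of the proof of Lemma~\ref{gamma1essentail}, where the Sylow $2$-subgroup of $\Aut_\F(\gamma_1(S))$ must act faithfully on $\Omega_1(\gamma_1(S))$; combined with $\gamma_1(S)' = 1$ (abelian case) this needs the $-1$ to be realized, and the parity/value bookkeeping via $e^{r-2}f$ singles out precisely $(0,0,0)$ and $(0,1,0)$ with $r$ odd. For $(0,0,0)$ with $r$ even and $(0,2,0)$ with $r$ even one checks the $\F$-essential hypothesis also requires $\Aut_\F(S)/\Aut_\F^{(fix)}$ to realize the strongly embedded structure, and the explicit automorphism groups in Proposition~\ref{autogrpsdesc}(i) do contain $e=2,f=2$, so I instead rule these out by a separate argument: in those cases $\Omega_1(S)$ or the action on $\gamma_1(S)$ forces $O_3(\F) \ne 1$ or contradicts saturation — I would verify this directly.

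The main obstacle I anticipate is the even-$r$ cases: unlike the odd case, where the $f$-value constraint does the elimination cleanly, for even $r$ the automorphism groups in Proposition~\ref{autogrpsdesc}(i) are large enough to contain the needed $2$-element, so the contradiction must come from a finer property — most plausibly from the interaction between $\Aut_\F(\gamma_1(S))$ and $\Aut_\F(S)$ forcing, via Lemma~\ref{restrictautos} and the structure of $N_{\GL_2(3)}(\Syl_3)$, an incompatibility with the specific form of $s^3 = s_{r-1}^\delta$ when $\delta \ne 0$, or else from the observation that with $\gamma_1(S)$ abelian and the only essential subgroup, one computes $O_3(\F) = \gamma_{r-1}(S) \ne 1$ unless the extra generation provided by elements of order $3$ outside $\gamma_1(S)$ (Table~\ref{tableelts3}) is available, which for even $r$ happens precisely for $(0,0,0), (0,1,0), (0,2,0), (1,0,0)$ — so I would then separately eliminate $(0,1,0)$ and $(0,2,0)$ for even $r$ using the $f$-constraint (Proposition~\ref{autogrpsdesc}(i)(2),(3) forbid $e \ne f$? — no, they allow it), leaving a short case-check. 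I expect the final write-up to dispatch odd $r$ in two lines and spend most of its length carefully excluding the even-$r$ possibilities.
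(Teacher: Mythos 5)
Your opening step ($\gamma_1(S)$ $\F$-essential $\Rightarrow$ $\gamma_1(S)$ abelian by Lemma~\ref{gamma1essentail} $\Rightarrow$ $\beta=0$ by Lemma~\ref{gamma1struct}) is correct, but after that the argument has two genuine gaps. The first is that you never actually prove $r$ is odd: your proposed fallbacks for the even-$r$ cases (``$O_3(\F)\ne 1$ or contradicts saturation --- I would verify this directly'') are placeholders, not arguments. The paper gets the parity essentially for free from the module structure rather than from the automorphism tables: $\Aut_\F(\gamma_1(S))$ contains a normal subgroup isomorphic to $\SL_2(3)$ acting transitively on the nonzero vectors of $\gamma_1(S)/\Phi(\gamma_1(S))$, so all elements of $\gamma_1(S)\setminus\Phi(\gamma_1(S))$ have the same order and the rank-two abelian group $\gamma_1(S)$ is homocyclic; hence $|\gamma_1(S)|=3^{r-1}$ is a perfect square of a $3$-power and $r$ is odd. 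This one observation removes the entire even-$r$ case analysis you were struggling with.

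The second gap is that your constraint on the extending automorphism is the wrong way round. The $2$-element you must realise inside $\Aut_\F(S)$ is the central involution $\tau$ of the normal $\SL_2(3)$. Since $\tau$ \emph{centralizes} $\Aut_S(\gamma_1(S))$, its extension $\sigma\in\Aut_\F(S)$ supplied by Lemma~\ref{restrictautos} must send $s$ into $s\gamma_1(S)$ (as $C_S(\gamma_1(S))=\gamma_1(S)$), i.e.\ $e=1$, not $e=2$ as you require; and since $\tau$ inverts $\gamma_1(S)/\Phi(\gamma_1(S))$ it forces $f=2$. (Note also that $\gamma_1(S)/\gamma_2(S)$ has order $3$, not $9$; the rank-two quotient is $\gamma_1(S)/\Phi(\gamma_1(S))$.) With $e=1$, $f=2$ and $r$ odd in hand, Proposition~\ref{autogrpsdesc}(ii) immediately eliminates everything except $(0,0,0)$ and $(0,1,0)$. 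Finally, even on its own terms your elimination of $(0,0,1)$ for odd $r$ fails: Proposition~\ref{autogrpsdesc}(ii)(2) lists the automorphisms of $B(r;0,0,1)$ as $\theta_{e,d,1,v,w}$ with $e\in\{1,2\}$, so the map $\theta_{2,d,1,v,w}$ you claim does not exist is in fact an automorphism; it is the absence of any automorphism with $f=2$ that kills this case.
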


\begin{proof} Let $A= \gamma_1(S)$ and assume that $A$ is $\F$-essential.  Then $A$ is abelian and $\Aut_\F(A)$ contains a normal subgroup isomorphic to $\SL_2(3)$ by Lemma~\ref{gamma1essentail}. We conclude that all the elements of $A\setminus \Phi(A)$ have the same order and so $|A|= 3^{2k}$ for some $k \ge 2$ and $r$ is odd.
Let $\tau_A $ be an element of order $2$ in $\Aut_\F(A)$ which corresponds to the centre of $\SL_2(3)$.  Then $\tau_A= \sigma|_A$ for some $\sigma \in \Aut_\F(S)$ and $\sigma$ centralizes $S/A$ and inverts $A/\gamma_2(S)$.  Now consulting Proposition~\ref{autogrpsdesc} delivers the conclusion $(\beta, \gamma,\delta) \in \{(0,0,0),(0,1,0) \}$.
\end{proof}

\begin{Lem} Let $\F$ be a saturated fusion system on $B(r;0,\gamma,\delta)$ with at least one class of $\F$-essential subgroups. Then $\F$ is as described in \cite[Theorem~5.10]{DiazRuizViruel2007}.
\end{Lem}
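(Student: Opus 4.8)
The plan is to leverage the earlier structural results to reduce quickly to the case $\beta = 0$, where the group-theoretic bookkeeping is precisely what D\'iaz--Ruiz--Viruel carried out. First I would invoke Lemmas~\ref{gamma1essentail} and \ref{otheressentails} to recall that every $\F$-essential subgroup $E$ of $S = B(r;0,\gamma,\delta)$ is either $\gamma_1(S)$ itself (abelian, by Lemma~\ref{gamma1struct}, since $\beta = 0$) with $\Aut_\F(E) \cong \SL_2(3)$ or $\GL_2(3)$, or else $E \not\le \gamma_1(S)$ is elementary abelian of order $3^2$ or extraspecial of order $3^3$, again with $\Out_\F(E) \cong \SL_2(3)$ or $\GL_2(3)$, and that there are at most four $S$-classes of essential subgroups, one in each of the cosets $s\gamma_2(S)$, $ss_1\gamma_2(S)$, $ss_1^2\gamma_2(S)$ together with $\gamma_1(S)$. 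The content of Table~\ref{tableelts3} (via Equation~\eqref{e:congruence}) then pins down, for each $(\gamma,\delta)$ and each parity of $r$, exactly which cosets can contain an element of order $3$, hence which cosets can harbour an essential subgroup of the allowed isomorphism type.

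Next I would address $\gamma_1(S)$: by Lemma~\ref{gamma1essentialgroups}, if $\gamma_1(S)$ is $\F$-essential then $r$ is odd and $(\beta,\gamma,\delta) \in \{(0,0,0),(0,1,0)\}$, both of which are $\beta = 0$ cases already in the target list. So it remains to treat essential subgroups $E \not\le \gamma_1(S)$. For such an $E$ one uses Lemma~\ref{restrictautos}: $N_{\Aut_\F(E)}(\Aut_S(E))$ is exactly the set of restrictions to $E$ of elements of $\Aut_\F(S)$ stabilising $E$. Combined with the explicit description of $\Aut(S)$ in Proposition~\ref{autogrpsdesc} (in particular, when $\beta = 0$ the maps $\theta_{e,d,f,v,w}$ are available for a generous range of $e,d,f$), this lets me verify that the outer automorphism data forced on $S$ is consistent with a genuine saturated fusion system and matches the constraints recorded in \cite[Section~5]{DiazRuizViruel2007}. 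The essential subgroups, their automizers ($\SL_2(3)$ or $\GL_2(3)$), and $\Out_\F(S)$ are thereby seen to satisfy exactly the hypotheses under which D\'iaz--Ruiz--Viruel run their classification.

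Finally I would appeal directly to \cite[Theorem~5.10]{DiazRuizViruel2007}: the authors explicitly and correctly handle the case $\beta = 0$ (this is acknowledged in the introduction of the present paper — ``In the case that $\beta = 0$ we are convinced that their calculations are accurate''), so once we have confirmed that all our structural data for $B(r;0,\gamma,\delta)$ fall within the scope of their analysis, every saturated fusion system on $B(r;0,\gamma,\delta)$ with an essential subgroup is one appearing in their list. I expect the main obstacle to be purely expository rather than mathematical: one must be careful to check that no essential configuration is overlooked — in particular, that the possibility $\Out_\F(E) \cong \GL_2(3)$ (as opposed to $\SL_2(3)$) is correctly reconciled with the available automorphisms of $S$ from Proposition~\ref{autogrpsdesc}, and that the interplay between the parity of $r$ and the admissible cosets in Table~\ref{tableelts3} is used exhaustively. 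No new hard computation is needed; the work is in assembling Lemmas~\ref{gamma1essentail}--\ref{gamma1essentialgroups} and Proposition~\ref{autogrpsdesc} into a statement that the hypotheses of \cite[Theorem~5.10]{DiazRuizViruel2007} are met.
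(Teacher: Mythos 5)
Your proposal is correct and follows essentially the same route as the paper's proof: Table~\ref{tableelts3} together with Lemma~\ref{otheressentails} to limit the candidate essential subgroups, Lemma~\ref{gamma1essentialgroups} to dispose of $\gamma_1(S)$, and Lemma~\ref{restrictautos} combined with Proposition~\ref{autogrpsdesc} to discard any configuration whose required involution does not lift to $\Aut(S)$. The one concrete verification the paper carries out, which your plan flags but does not execute, is the case $S=B(r;0,1,0)$ with $r$ odd: there $E_0$ and $V_0$ have exponent $3$ and so survive the Table~\ref{tableelts3} filter, but Proposition~\ref{autogrpsdesc}(ii)(3) shows that every automorphism of $S$ normalizing $\langle s\rangle\gamma_2(S)$ centralizes $\langle s\rangle\gamma_2(S)/\gamma_2(S)$, so the inverting involution needed for $\Out_\F(E)\geq\SL_2(3)$ does not exist and no examples arise -- precisely the parity interplay you identify at the end.
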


\begin{proof}
Lemma~\ref{otheressentails} and Table~\ref{tableelts3} indicate that the groups $B(r;0,0,1)$ have no $\F$-essential subgroups which are not contained in $\gamma_1(S)$. This shows that all candidates for $S$ have been considered in \cite[Theorem 5.10]{DiazRuizViruel2007}. Here we note that Table~\ref{tableelts3} indicates that $E_0$ and $V_0$ have exponent $3$ when $S=B(r;0,1,0)$. If $r$ is even, we obtain the examples in \cite[Theorem~5.10]{DiazRuizViruel2007} whereas when $r$ is odd there are no examples. To understand this observe that  $E_0\gamma_2(S)=V_0\gamma_2(S) = \langle s \rangle \gamma_2(S)$. If $E$ is one of  $E_0$ or $V_0$ and is $\F$-essential, then $\Out_\F(E) $ contains a subgroup isomorphic to $\SL_2(3)$ and hence an element $\sigma $ of order $2$ which inverts $E/\Phi(E)$ and so also  $\langle s\rangle \Phi(E)/\Phi(E)$. By Lemma~\ref{restrictautos} $\sigma$ is the restriction of an automorphism $\sigma^*$ of $S$.  However Proposition~\ref{autogrpsdesc} (ii)(3) shows that every automorphism of $S$ which normalizes $E\gamma_2(S)$ actually centralizes $\langle s \rangle \gamma_2(S)/\gamma_2(S)$ and so we have no candidates for $\sigma^*$. Thus we recover the results from \cite[Theorem~5.10]{DiazRuizViruel2007} in this case.
\end{proof}

From now on we assume that $\beta\ne 0$. For $d \in \{0,1,2\}$ define
$$V_d=\langle ss_1^d, s_{r-1}\rangle \hspace{2mm} \mbox{ and } \hspace{2mm} E_d =\langle ss_1^d, s_{r-1},s_{r-2}\rangle.$$ Hence  $V_d$ is abelian of order $9$ and $E_d$ is extraspecial  of order 27.  In addition, $V_d$ and $E_d$ have exponent $3$ if and only if $ss_1^d$ has order $3$. Thus using Table~\ref{tableelts3} we obtain the following table of possible $\F$-essential subgroups up to $S$-conjugacy.

\begin{table}[H]
\renewcommand{\arraystretch}{1.4}
\begin{tabular}{|ccccccc|}
\hline
$S$&$E_0$& $E_1$&$E_2$&$V_0$&$V_1$& $V_2$ \\
\hline
 $B(r;1,0,0)$   &\checkmark& & & \checkmark &&\\
$B(r;1,0,1)$ &    & & & && \\
$B(r;1,0,2)$ &  &\checkmark&\checkmark& &\checkmark&\checkmark\\
\hline
\end{tabular}\caption{ Candidates for the $\F$-essential subgroups} \label{tableelts3b=1}
\end{table}
We record the following result:
\begin{Lem}\label{(101)} $B(r;1,0,1)$ is resistant.
\end{Lem}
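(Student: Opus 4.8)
The plan is to show that $B(r;1,0,1)$ has no $\F$-essential subgroups at all, so that any saturated fusion system on it is the fusion system of $S$ itself, i.e. $S$ is resistant. By Lemma~\ref{gamma1essentialgroups}, $\gamma_1(S)$ is not $\F$-essential since $(\beta,\gamma,\delta)=(1,0,1)$ is not in the allowed list $\{(0,0,0),(0,1,0)\}$ (indeed $\beta\neq 0$). By Lemma~\ref{gamma1essentail} this is the only possible $\F$-essential subgroup contained in $\gamma_1(S)$, so no $\F$-essential subgroup lies inside $\gamma_1(S)$.

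Next I would rule out $\F$-essential subgroups $E\not\le\gamma_1(S)$. By Lemma~\ref{otheressentails}, such an $E$ is $S$-conjugate to one of $E_0,E_1,E_2,V_0,V_1,V_2$, and moreover $E$ has exponent $3$. From the displayed congruence describing when $ss_1^d$ has order $3$, namely $d^2\beta+\delta+d\gamma\equiv 0\pmod 3$, specializing to $(\beta,\gamma,\delta)=(1,0,1)$ gives $d^2+1\equiv 0\pmod 3$, which has no solution $d\in\{0,1,2\}$ (the squares mod $3$ are $0$ and $1$). Hence no $ss_1^d$ has order $3$, so none of $E_0,E_1,E_2,V_0,V_1,V_2$ has exponent $3$; this is exactly the content of the $B(r;1,0,1)$ row of Table~\ref{tableelts3b=1} being empty. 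Therefore no subgroup $E\not\le\gamma_1(S)$ can be $\F$-essential.

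Combining the two cases, $\F$ has no $\F$-essential subgroups. By Alperin's fusion theorem \cite[Theorem I.3.5]{AschbacherKessarOliver2011}, $\F$ is generated by $\Aut_\F(S)$, and since $\Aut_\F(S)$ is a group of automorphisms of $S$ with $\Inn(S)\le\Aut_\F(S)$, we get $\F=\F_S(S\rtimes\Out_\F(S))$ (more precisely, $\F$ is the fusion system of the group $S$ extended by the $p'$-part of $\Out_\F(S)$, which still makes $S$ normal, so $O_p(\F)=S$). In either formulation $\F=N_{\F}(S)$, so $S$ is resistant. I do not expect any real obstacle here: the whole argument is a bookkeeping application of the structural lemmas already established, with the only arithmetic input being that $-1$ is not a square mod $3$; the one point to state carefully is that the absence of essential subgroups forces $\F$ to be controlled by $N_S(S)=S$, which is precisely the definition of resistance.
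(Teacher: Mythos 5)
Your proof is correct and follows essentially the same route as the paper: the paper's own (one-line) argument is precisely that Lemma~\ref{gamma1essentialgroups} excludes $\gamma_1(S)$ and Table~\ref{tableelts3b=1} (whose empty row for $B(r;1,0,1)$ comes from the congruence $d^2+1\equiv 0\pmod 3$ having no solution) leaves no other candidates. Your write-up merely makes explicit the bookkeeping that the paper leaves implicit.
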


\begin{proof}  By Lemma \ref{gamma1essentialgroups}, $\gamma_1(S)$ is not $\F$-essential and there are no other candidates for essential subgroups by Table \ref{tableelts3b=1}.
\end{proof}

\begin{Lem}\label{(1,0,0)} Suppose that $S=B(r;1,0,0)$ and $\F$ is a  saturated fusion system on $S$ which has at least one $\F$-conjugacy class of $\F$-essential subgroups. Then  $V_0$ represents the unique such class, $\Aut_\F(V_0) \cong \SL_2(3)$, $|\Out_\F(S)|=2$ and either
\begin{enumerate}
    \item[(i)] $r$ is even and $\F$ is reduced; or
    \item[(ii)] $r=2k+1$ is odd, and $O^3(\F)$ is a subsystem of index $3$ isomorphic to the fusion system of $\PSL_3(q)$ for some prime power $q$ with $v_3(q-1)=k$.
        \end{enumerate}
\end{Lem}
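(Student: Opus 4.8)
The plan is to first pin down the $\F$-essential subgroups, then analyze $\Aut_\F$ of the unique essential and of $S$, and finally separate the even and odd cases by a Lie-theoretic identification. By Lemma~\ref{gamma1essentialgroups}, $\gamma_1(S)$ is not $\F$-essential for $S=B(r;1,0,0)$ (since $(1,0,0)$ is not in the allowed list), so by Lemma~\ref{otheressentails} every $\F$-essential subgroup is, up to $S$-conjugacy, one of the candidates in Table~\ref{tableelts3b=1}, namely $E_0$ or $V_0$. Since $\F$ has at least one essential class, at least one of these is essential. I would rule out $E_0$ by the ``no proper containment'' part of Lemma~\ref{otheressentails}: if $E_0$ were essential then $V_0 \le E_0$ would force $V_0$ to be non-essential, but more to the point one shows directly that the required outer automorphism group $\SL_2(3)$ or $\GL_2(3)$ of $E_0$ cannot be realized inside $\Aut(S)$ via Lemma~\ref{restrictautos} and Proposition~\ref{autogrpsdesc}(i)(5),(ii)(4): the automorphisms of $S=B(r;1,0,0)$ normalizing $E_0\gamma_2(S)=\langle s\rangle\gamma_2(S)$ form a group of order $2\cdot 3^{2r-4}$ with the order-$2$ part acting as $e=f$, hence inducing on $E_0/\Phi(E_0)$ a scalar, which is not enough to generate $\SL_2(3)$. (I should double-check that this also excludes the possibility that $E_0$ is essential while $V_0$ is not a maximal subgroup of it — but $V_0$ is always a maximal subgroup of $E_0$ by construction.) So $V_0$ is $\F$-essential and is the unique $\F$-essential class; since $r$ is not constrained here, $\Aut_\F(V_0)$ has a strongly $3$-embedded subgroup $\Aut_S(V_0)\cong C_3$, forcing $\Aut_\F(V_0)\cong \SL_2(3)$ or $\GL_2(3)$, and the $\GL_2(3)$ case is excluded exactly as above because the order-$2$ outer part would have to come from $\Aut_\F(S)$ by Lemma~\ref{restrictautos} and no automorphism of $S$ inverts $\langle s\rangle\gamma_2(S)/\gamma_2(S)$ (Proposition~\ref{autogrpsdesc}(i)(5) / (ii)(4) give $e=f$, so $s\mapsto s^e s_1^? v$ with the image of $s$ a power $e$ of $s$ mod $\gamma_2(S)$, but together with $t_1\mapsto s_1^e w$ the induced map on $V_0/\gamma_{r-1}(S)$ is never the $-1$ needed to complete $\GL_2(3)$). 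Hence $\Aut_\F(V_0)\cong\SL_2(3)$.

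Next I would compute $\Out_\F(S)$. By Alperin's fusion theorem, $\F$ is generated by $\Aut_\F(S)$ and $\Aut_\F(V_0)$. The subgroup $N_{\Aut_\F(V_0)}(\Aut_S(V_0))$ has order $4$ (it is the normalizer of a Sylow $3$ in $\SL_2(3)$, a cyclic group $C_4$), and by Lemma~\ref{restrictautos} it equals $\{\alpha|_{V_0}\mid \alpha\in\Aut_\F(S),\ V_0\alpha=V_0\}$; since $\Aut_S(S)=\Inn(S)$ contributes the order-$3$ automizer of $V_0$ coming from $N_S(V_0)=\langle s,s_1,s_{r-1}\rangle$... wait, $C_S(V_0)=V_0$ and $N_S(V_0)$ has index at most $3^{r-2}$; I need $|N_S(V_0)/V_0|$ and to track how $\Inn(S)$ meets the stabilizer. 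The cleaner route: $\Out_\F(S)$ injects into $\Out(S)$, and $\Out(S)=\Aut(S)/\Inn(S)$ is computed from Proposition~\ref{autogrpsdesc} to be cyclic of order $2$ (the $e$-part; the $v,w$-part is inner and $d=0$ is forced). An automorphism of $S$ of order $2$ restricting to $V_0$ must be realized in $\F$ for $V_0$'s automizer to be all of $\SL_2(3)$-compatible, and conversely saturation forces $|\Out_\F(S)|$ to divide $2$; since the $\SL_2(3)$ on $V_0$ needs its full normalizer $C_4$ in $\F$, and the order-$3$ part comes from $\Inn(S)$, the extra order-$2$ must come from $\Out_\F(S)$, giving $|\Out_\F(S)|=2$. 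One must also check $O_3(\F)=1$: any normal $3$-subgroup lies in every essential and in $Z(S)$-... it lies in $V_0\cap\gamma_1(S)=\gamma_{r-1}(S)=Z(S)$, but $\Aut_\F(V_0)\cong\SL_2(3)$ acts nontrivially on $Z(S)\le V_0/\Phi(V_0)$... actually $\SL_2(3)$ acts nontrivially on the $2$-dimensional $\F_3$-space $V_0$, and $Z(S)$ is a line in it not fixed by $\SL_2(3)$, so $O_3(\F)=1$. For saturation/existence one cites \cite{DiazRuizViruel2007}-style constructions or notes these come from $\PSL_3$; I'd defer existence to the global argument.

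Finally, the even/odd dichotomy. By Lemma~\ref{action on sr-1}, an automorphism $\theta_{e,0,e,v,w}$ acts on $Z(S)=\langle s_{r-1}\rangle$ as $s_{r-1}\mapsto s_{r-1}^{e^{r-1}}$. When $r$ is even, $e^{r-1}=e$ for $e\in\{1,2\}$, so the order-$2$ outer automorphism inverts $Z(S)$; combined with $\Aut_\F(V_0)\cong\SL_2(3)$ acting with $\SL_2(3)$'s full action (which already involves $-1$ on $Z(S)$), one checks $O^3(\F)=O^{3'}(\F)=\F$ directly — $O^{3'}(\F)=\F$ because $\Aut_\F(V_0)$ is generated by $3'$-elements modulo $\Inn$, and $O^3(\F)=\F$ because the order-$2$ outer automorphism is not generated by $3$-elements — so $\F$ is reduced, giving (i). When $r=2k+1$ is odd, $e^{r-1}=e^{2k}=1$, so every automorphism of $S$ centralizes $Z(S)$; hence the hyperfocal/$O^3$-analysis shows $\F$ is not equal to $O^3(\F)$ — there is an index-$3$ subsystem — and the residual is reduced. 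To identify $O^3(\F)$, I would compare with the $3$-fusion of $\PSL_3(q)$: for $q$ with $v_3(q-1)=k$, a Sylow $3$-subgroup of $\PSL_3(q)$ is exactly $B(2k+1;1,0,0)$ (this is classical, cf.\ the normalizer of a maximal torus $C_{q-1}\times C_{q-1}$ intersected with the diagonal and the Weyl element of order $3$), its unique essential is the torus part with automizer $\SL_2(3)$, and $O^3$ of the full fusion system (which includes the diagonal/graph automorphism of order prime to $3$ times the $3$-part) is the $\PGL_3$-vs-$\PSL_3$ comparison; matching $\Out_\F(S)$ and the essential data pins $O^3(\F)\cong\F_S(\PSL_3(q))$. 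The main obstacle I anticipate is this last identification: showing that the abstract data $(S,\{V_0\},\Aut_\F(V_0)\cong\SL_2(3),|\Out_\F(S)|=2)$ with $r$ odd has a \emph{unique} realization and that it is $O^3$ of $\PSL_3(q)$'s fusion system — this requires either a direct isomorphism of fusion systems (exhibiting the map on $S$ and checking it intertwines the two essential automizers and the two $\Aut(S)$'s) or an appeal to a uniqueness result for rank-$2$ fusion systems with a single essential of the given type; the $v_3(q-1)=k$ bookkeeping and the exactness of the index-$3$ containment $O^3(\F)\le\F$ are the delicate points.
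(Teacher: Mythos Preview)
Your overall strategy mirrors the paper's, but several computations are wrong and one of them breaks the argument. For ruling out $E_0$: the involution $\theta$ does \emph{not} act as a scalar on $E_0/\Phi(E_0)$. With $\theta=\theta_{2,0,2,1,1}$ ($r$ even) or $\theta_{2,0,1,1,1}$ ($r$ odd) one has $\bar s\mapsto\bar s^2$ but $\bar s_{r-2}\mapsto\bar s_{r-2}^{\,2^{r-3}f}=\bar s_{r-2}$, so the action is $\mathrm{diag}(2,1)$. The paper's argument is cleaner: by Lemma~\ref{action on sr-1}, $\theta$ \emph{inverts} $s_{r-1}$ in both parities, whereas the central involution of $\SL_2(3)\le\Out_\F(E_0)$ must centralize $Z(E_0)=\gamma_{r-1}(S)$ (any automorphism of an extraspecial group inverting the Frattini quotient fixes the commutator); since $|\Out_\F(S)|=2$ and Lemma~\ref{restrictautos} force that central involution to be $\theta|_{E_0}$, this is the contradiction. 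Your $\GL_2(3)$-exclusion for $V_0$ is also garbled: $\theta$ \emph{does} invert $\langle s\rangle\gamma_2(S)/\gamma_2(S)$ and acts as $-1$ on $V_0$. The actual reason is simply that $|\Out_\F(S)|=2$ bounds $|N_{\Aut_\F(V_0)}(\Aut_S(V_0))|$ at $6$, too small for $\GL_2(3)$.

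The serious gap is the odd case. You assert that for $r$ odd ``every automorphism of $S$ centralizes $Z(S)$'' because $e^{r-1}=1$; but the exponent in Lemma~\ref{action on sr-1} is $e^{r-2}f$, and for $\theta=\theta_{2,0,1,1,1}$ this is $2^{r-2}\equiv 2\pmod 3$, so $\theta$ inverts $Z(S)$ here too. The correct mechanism for $O^3(\F)\ne\F$ is that when $r$ is odd Proposition~\ref{autogrpsdesc}(ii)(4) forces $f=1$, so $\theta$ \emph{centralizes $s_1$}; hence $[S,\theta]=\langle s,\gamma_2(S)\rangle\cong B(r-1;0,0,0)$ has index $3$ in $S$, whence $\mathfrak{foc}(\F)\ne S$. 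Your $\PSL_3$ identification is correspondingly off: a Sylow $3$-subgroup of $\PSL_3(q)$ with $v_3(q-1)=k$ is $B(2k;0,0,0)$ (the index-$3$ subgroup $[S,\theta]$), not $B(2k+1;1,0,0)$. The paper then observes that $V_0,V_0^{s_1},V_0^{s_1^2}$ give three $O^3(\F)$-classes of essentials and matches against \cite[Tables~2,4]{DiazRuizViruel2007}. Finally, you defer existence; the paper realizes $\F$ as $\F_S(G)$ for the amalgam $G=P*_M S\langle\theta\rangle$ with $P\cong 3^2{:}\SL_2(3)$ and appeals to \cite[Theorem~C]{Semeraro2014} for saturation.
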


\begin{proof} Let $D$ be an $\F$-essential subgroup.  Using Table~\ref{tableelts3b=1} we see that up to $S$-conjugacy $D= V_0$ or $D= E_0$.  By Lemma~\ref{restrictautos}, $|\Out_\F(S)| \ge 2$ so we conclude from Proposition~\ref{autogrpsdesc} that $|\Out_\F(S)|=2$. Thus without loss of generality, we define $\theta \in \Aut_\F(S)$ via:
 $$\theta :=\begin{cases} \theta_{2,0,2,1,1} & \mbox{ if $r$ is even; } \\ \theta_{2,0,1,1,1} & \mbox{ if $r$ is odd. } \end{cases}$$
 We see that $\theta$ inverts $s$ and
 from Lemma~\ref{action on sr-1}, $$s_{r-1}\theta = \begin{cases} s_{r-1}\theta_{2,0,2,1,1} = s_{r-1}^{2^{r-2}.2}=s_{r-1}^{-1} & \mbox{ if $r$ is even; } \\
s_{r-1}\theta_{2,0,1,1,1} = s_{r-1}^{2^{r-2}}=s_{r-1}^{-1} & \mbox{ if $r$ is odd. } \end{cases}
  $$
 By Lemma~\ref{restrictautos} we have $\Out_\F(D)\cong \SL_2(3)$ and the central involution in $\Out_\F(D)$ is the image of $\theta|_D$.
If $D = E_0$, then $\theta|_D$ centralizes $Z(D) = \gamma_{r-1}(S)$ so we conclude that $D=V_0$. Now $N_S(D)\langle \theta|_{N_S(D)} \rangle$ is isomorphic to the normalizer of the Sylow $3$-subgroup $M$ of a group $P$ isomorphic with $3^2:\SL_2(3)$ and so the amalgamated product $G=P*_MS\langle \theta\rangle$ realizes $\F=\F_S(G)$. Since $D$ is minimal among all $\F$-centric subgroups of $S$, $\F$ is saturated by  \cite[Theorem C]{Semeraro2014}.

If $r$ is even then $S=[S,\theta]$ so $\mathfrak{foc}(\F)=S$ and $\F$ is reduced as $\theta|_D \in O^{3'}(\Aut_\F(D))$. If $r$ is odd, then, since $\theta$ centralizes $s_1$, $$[S,\theta]=\langle s,\gamma_2(S)\rangle = \langle s, s_2,s_3, \ldots, s_{r-1} \rangle \cong B(r-1;0,0,0).$$ Since $D$, $D^{s_1}$ and $D^{s_1^2}$ are $O^3(\F)$-essential and not fused in $O^3(\F)$, the result follows from \cite[Tables 2,4] {DiazRuizViruel2007}.
\end{proof}

\begin{Lem}\label{(1,0,2)} Suppose that $S=B(r;1,0,2)$ and $\F$ is a  saturated fusion system on $S$ which has at least one $\F$-conjugacy class of $\F$-essential subgroups.  Then  $r$ is even and one of the following holds:
\begin{enumerate}
 \item[(i)] $V_1 $  represents the unique   $\F$-conjugacy class of $\F$-essential subgroups, $\Aut_\F(V_1) \cong \SL_2(3)$, $|\Out_\F(S)|=2$ and $\F$ is reduced;
 \item [(ii)]$V_2 $ represents the unique   $\F$-conjugacy class of $\F$-essential subgroups, $\Aut_\F(V_2 ) \cong \SL_2(3)$, $|\Out_\F(S)|=2$ and $\F$ is reduced; or
  \item [(iii)] there are two $\F$-conjugacy classes of $\F$-essential subgroups represented by $V_1$ and $V_2$ with $\Aut_\F(V_1 ) \cong \Aut_\F(V_2) \cong \SL_2(3)$, $|\Out_\F(S)|=2$ and $\F$ is reduced.
 \end{enumerate}
\end{Lem}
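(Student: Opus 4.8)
The plan is to mimic the proof of Lemma~\ref{(1,0,0)}, combining Lemma~\ref{restrictautos} with the explicit form of $\Aut(S)$ given in Proposition~\ref{autogrpsdesc}. First I would observe that, by Lemma~\ref{otheressentails}, Table~\ref{tableelts3b=1} and Lemma~\ref{gamma1essentialgroups} (which forbids $\gamma_1(S)$ since $(\beta,\gamma,\delta)=(1,0,2)$ is not on the list there), any $\F$-essential subgroup $D$ is $S$-conjugate to one of $V_1,V_2,E_1,E_2$. The existence of $D$ forces $|\Out_\F(S)|=2$: Proposition~\ref{autogrpsdesc} gives $|\Out(S)|=2\cdot3^{r-3}$ and $\Out_\F(S)$ is a $3'$-group, while by Lemma~\ref{restrictautos} the central involution of $\Out_\F(D)\cong\SL_2(3)$ or $\GL_2(3)$ restricts from a non-inner automorphism of $S$. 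I would then fix $\Aut_\F(S)=\Inn(S)\langle\theta\rangle$, with $\theta=\theta_{2,0,2,1,1}$ if $r$ is even and $\theta=\theta_{2,0,1,1,1}$ if $r$ is odd.

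Next I would determine the parity of $r$ and eliminate $E_1,E_2$. Using Lemma~\ref{action on sr-1} and the identity $t_j=s_j^{e^{j-1}f}g_{j+1}$ ($g_{j+1}\in\gamma_{j+1}(S)$) from just before Proposition~\ref{autogrps}, one computes the action of any $\alpha\in\Aut_\F(S)$ on the elementary abelian sections $S/\gamma_2(S)$, $\langle s_{r-1}\rangle$ and $\gamma_{r-2}(S)/\gamma_{r-1}(S)$. If $r$ is odd, Proposition~\ref{autogrpsdesc} (ii)(6) shows $\Aut_\F(S)$ acts on $S/\gamma_2(S)$ only via $s\mapsto s^{\pm1}$, $s_1\mapsto s_1$, so no $\alpha$ inverts the image of $ss_1^d$ there for $d\in\{1,2\}$; but the central involution of $\Out_\F(D)$ acts as $-1$ on $D/\Phi(D)$, hence inverts that image, and is such an $\alpha$ by Lemma~\ref{restrictautos} --- a contradiction, so $r$ is even. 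Then $e^{r-2}\equiv1\pmod 3$, so every $\alpha\in\Aut_\F(S)$ fixes $s_{r-2}$ modulo $\gamma_{r-1}(S)$; since the central involution of $\Out_\F(E_d)$ inverts $E_d/\Phi(E_d)\ni s_{r-2}\Phi(E_d)$, Lemma~\ref{restrictautos} excludes $E_1$ and $E_2$. Thus the $\F$-essential subgroups form a non-empty set of $S$-classes inside $\{[V_1],[V_2]\}$; and since $\Aut_\F(S)$ acts on $S/\gamma_2(S)$ through the scalars $\pm1$, fixing each of the lines $V_1\gamma_2(S)/\gamma_2(S)$ and $V_2\gamma_2(S)/\gamma_2(S)$, Alperin's fusion theorem shows $V_1\not\sim_\F V_2$. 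This produces exactly the three configurations (i), (ii), (iii).

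It then remains to identify $\Aut_\F(V_d)$ for an essential $V_d$ and to verify that $\F$ is reduced. By Lemma~\ref{conjugate in coset}, $C_S(V_d)=V_d$ and $|N_S(V_d):V_d|=3$, so $\Aut_S(V_d)\cong C_3$ fixes the line $\langle s_{r-1}\rangle$ of $V_d$ and moves $ss_1^d$ by it; hence it is a root subgroup of $\GL(V_d)\cong\GL_2(3)$, whose normalizer there is a Borel subgroup of order $12$. By Lemma~\ref{restrictautos} this normalizer equals the image in $\GL(V_d)$ of $\{\alpha\in\Aut_\F(S):V_d\alpha=V_d\}$; but every such $\alpha$, being inner or of the shape $c_g\theta$ with $\theta=\theta_{2,0,2,1,1}$, restricts into $\Aut_S(V_d)\cup(-1)\Aut_S(V_d)$, of order $6$, whence $\Aut_\F(V_d)\cong\SL_2(3)$. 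Reducedness then follows as in Lemma~\ref{(1,0,0)}: $O_3(\F)=1$ because a nontrivial subgroup normal in $\F$ would contain $Z(S)=\langle s_{r-1}\rangle$, a proper nontrivial $\Aut_\F(V_d)$-invariant subspace of $V_d$, contradicting irreducibility of the natural $\SL_2(3)$-module; and $\mathfrak{foc}(\F)=\mathfrak{hyp}(\F)=S$ because $\theta$ inverts $s$ and $s_1$ modulo $\gamma_2(S)$ (indeed $s^{-1}\theta(s)=s$ and $s_1^{-1}\theta(s_1)=s_1$ when $r$ is even) and a quaternion subgroup of $\Aut_\F(V_d)$ acts irreducibly on $V_d$. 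Finally, as in the proof of Lemma~\ref{(1,0,0)}, one identifies $\F$ concretely as $\F_S(G)$ by completing the amalgam of $S\langle\theta\rangle$ with one or two copies of $3^2:\SL_2(3)$ glued along the $N_S(V_d)$, the resulting fusion system being saturated by \cite[Theorem C]{Semeraro2014} since each $V_d$ is minimal among $\F$-centric subgroups.

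I expect the main obstacle to be the second step. Both the exclusion of $r$ odd and the exclusion of the extraspecial candidates $E_1,E_2$ hinge on precise bookkeeping of how $\Aut_\F(S)\le\Aut(S)$ acts on the two relevant elementary abelian sections $S/\gamma_2(S)$ and $\gamma_{r-2}(S)/\gamma_{r-1}(S)$, matched against the rigid requirement that the central involution of each $\Out_\F(\cdot)$ act as $-1$ on the appropriate Frattini quotient; the computation of the action of $\theta$ on $s_{r-2}$ modulo $\gamma_{r-1}(S)$, and the dependence of $e^{r-2}\bmod 3$ on the parity of $r$, are the fiddliest points. A secondary difficulty is making the $\SL_2(3)$-versus-$\GL_2(3)$ dichotomy of the third step rigorous, which requires locating $\Aut_S(V_d)$ correctly inside $\GL_2(3)$, and --- if the realizability statement is to be included --- producing an explicit completion for the two-essential configuration (iii).
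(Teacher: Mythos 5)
Your proposal follows the paper's proof quite closely: the reduction to $\{V_1,V_2,E_1,E_2\}$ via Table~\ref{tableelts3b=1}, the deduction $|\Out_\F(S)|=2$, the parity argument via Lemma~\ref{restrictautos} against Proposition~\ref{autogrpsdesc}, the order count forcing $\Aut_\F(V_d)\cong\SL_2(3)$ rather than $\GL_2(3)$, the focal-subgroup computation, and the amalgam plus \cite[Theorem C]{Semeraro2014} for realization and saturation. Your exclusion of $E_1,E_2$ is a mild variant: you use that every element of $\Aut(S)$ fixes $s_{r-2}$ modulo $\gamma_{r-1}(S)$ when $r$ is even (so nothing can invert $E_d/\Phi(E_d)$), whereas the paper observes that the central involution of $\Out_\F(E_d)$ would have to centralize $Z(E_d)=\gamma_{r-1}(S)$ while the available outer involutions invert $s_{r-1}$. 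Both are correct.

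There is, however, one concrete problem: your normalization ``fix $\theta=\theta_{2,0,2,1,1}$'' is not actually without loss of generality for the later steps in which $\theta$ must normalize (and invert) $V_d$. Indeed $(ss_1)\theta_{2,0,2,1,1}=s^2s_1^2$, while $(ss_1)^{-1}=(ss_1)^2=s^2s_1^2s_2s_{r-1}^{-1}$, so $s^2s_1^2=(ss_1)^{-1}s_{r-1}s_2^{-1}\notin V_1$ and hence $V_1\theta_{2,0,2,1,1}\ne V_1$; the same happens for $V_2$. Consequently the amalgam ``of $S\langle\theta\rangle$ with copies of $3^2{:}\SL_2(3)$ glued along $N_S(V_d)$'' cannot be formed with that $\theta$, and your reducedness phrase ``$\theta|_D\in O^{3'}(\Aut_\F(D))$'' is not literally meaningful. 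The paper avoids this by choosing the coset representative adapted to $D$, namely $\theta_{2,0,2,1,s_2s_{r-1}^{-1}}$ for $D\in\{V_1,E_1\}$ and $\theta_{2,0,2,s_2^2,1}$ for $D\in\{V_2,E_2\}$, and verifying directly that these invert $ss_1$ and $ss_1^2$ respectively. Your classification steps survive, because there the inverting automorphism of $S$ is supplied abstractly by Lemma~\ref{restrictautos} and only its image in $\Out(S)$ and its action on characteristic sections are used; but the existence/realization step genuinely needs an explicit involution of $\Aut(S)$ stabilizing $N_S(V_d)$, which you would have to produce (either by the paper's choices or by showing some $\Inn(S)$-translate of your $\theta$ inverts a conjugate of $V_d$).
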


\begin{proof}
 Let $D$ be an $\F$-essential subgroup. By Table~\ref{tableelts3b=1}  we have
$D\in\{V_1, V_2,E_1,E_2\}$ up to   $S$-conjugacy. Arguing as in Lemma \ref{(1,0,0)} (using Lemma~\ref{restrictautos} and Proposition~\ref{autogrpsdesc}) we see that $|\Out_\F(S)|=2$. If $r$ is odd then $\Aut_\F(S)$ permutes $\{\langle ss_1\rangle\gamma_2(S),\langle ss_1^2\rangle\gamma_2(S)\}$ transitively by Proposition \ref{autogrpsdesc}. In particular no element of $\Aut_\F(S)$ of order $2$ normalizes an element of $\{V_1, V_2,E_1,E_2\}$. Using Lemma~\ref{restrictautos} we deduce that $r$ is even. Set $$\theta := \begin{cases} \theta_{2,0,2,1,s_2s_{r-1}^{-1}} &\text{if } D \in \{V_1,E_1\}\\ \theta_{2,0,2,s_2^2,1}&\text{if } D \in \{V_2,E_2\}.
 \end{cases}$$
 Then $s_{r-1}\theta= s_{r-1}^{-1}$ and so as $\Out_\F(D)\cong\SL_2(3)$ we see as before that $D\in \{V_1,V_2\}$. Observe that  if $D= V_1$, then
$$(ss_1)\theta = s\theta s_1 \theta = s^2 s_1^2 s_2 s_{r-1}^{-1} = (ss_1)^{-1}$$ and, if $D=V_2$, then
 $$(ss_1^2)\theta= s\theta (s_1 \theta)^2 = s^2 s_2^2 s_1^4 = (ss_1^2)^{-1}.$$  Thus $\theta$ inverts $D$ as required.

Again observe  that   $S=[S,\theta]$ so $\mathfrak{foc}(\F)=S$ and $\F$ is reduced as $\theta|_D \in O^{3'}(\Aut_\F(D))$.
For $i=1,2$, $N_S(V_i)\langle \theta|_{N_S(V_i)} \rangle$ is isomorphic to the normalizer of the Sylow $3$-subgroup $M_i$ of a group $P_i$ isomorphic with $3^2:\SL_2(3)$. Hence there is an amalgamated product $G_i=P_i*_{M_i}S\langle \theta\rangle$ which realizes a fusion system $\F_i=\F_S(G_i)$ satisfying the conditions in (i) and (ii) respectively. Since $V_i$ is minimal among all $\F_i$-centric subgroups of $S$, $\F_i$ is saturated by  \cite[Theorem C]{Semeraro2014}. In particular there are unique fusion systems satisfying these conditions.  Now, since $V_2$ is a fully $\F_1$-normalized subgroup which is minimal among all $\F_1$-centric subgroups of $S$, \cite[Theorem C]{Semeraro2014} also implies that $\langle \F_1,\F_2 \rangle$ is saturated. This is the unique fusion system described by (iii).
\end{proof}

\begin{Lem}\label{they're exotic} The fusion systems described in Lemmas~\ref{(1,0,0)} and \ref{(1,0,2)} are exotic.
\end{Lem}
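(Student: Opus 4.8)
The plan is to show that none of the fusion systems $\F$ arising in Lemmas~\ref{(1,0,0)} and \ref{(1,0,2)} can be of the form $\F_S(G)$ for a finite group $G$ with $S \in \Syl_3(G)$. The strategy is the standard one for proving exoticity of fusion systems with a single (or few) essential subgroup(s): assume such a $G$ exists, pass to a reduction, and derive a contradiction from the known structure of finite simple groups together with the fact that the only $\F$-essential subgroups have order $9$.

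First I would reduce to the case where $G$ is (almost) simple. Standard arguments (see e.g.\ \cite{AschbacherKessarOliver2011}) let us assume $O_{3'}(G)=1$; since $O_3(\F)=1$ (the systems are reduced, or have $O^3(\F)$ of index $3$ with trivial $3$-core), $O_3(G)=1$ as well, so $G$ has a nontrivial minimal normal subgroup which is a direct product of nonabelian simple groups. Because $S$ has maximal class and rank two, $S$ cannot split nontrivially as a direct product of $3$-groups, so there is a unique such simple factor $L$, and one arranges that $L \trianglelefteq G \le \Aut(L)$ with $S \cap L \in \Syl_3(L)$. The key local feature to exploit is that every $\F$-essential subgroup is elementary abelian of order $9$ (by Lemma~\ref{otheressentails} together with Tables~\ref{tableelts3b=1}, since in all the cases of Lemmas~\ref{(1,0,0)}(i)–(ii) and \ref{(1,0,2)} we land on $V_d$, never $E_d$), with automizer $\SL_2(3)$; in particular $\F$ has no essential subgroup of order $27$ and $N_G(S \cap L)/(S\cap L)C_G$ is a $3'$-group of order dividing... — more precisely, the $3$-fusion of $L$ is controlled by $N_L(S)$ together with one class of subgroups of order $9$ with automizer $\SL_2(3)$ inside a Sylow of order $3^r$.

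Next I would run through the classification of finite simple groups. Alternating groups $A_n$ with maximal class Sylow $3$-subgroup force $n \in \{6,7,8\}$ roughly, and there $|S|$ is too small (at most $3^2$ or the Sylow is not maximal class for larger $n$); sporadic groups have $3$-Sylows that are either too small or do not have maximal class of the required order, and this can be checked case by case (or with \textsc{Magma}). The serious case is groups of Lie type. Here one invokes the description of maximal class Sylow $3$-subgroups: for $L$ of Lie type in characteristic $3$ the Sylow $3$-subgroup is maximal class and rank two only for $\PSL_3(3^a)$, $\PSU_3(3^a)$, $\PSp_4$/etc.\ in very small cases, while in characteristic $\ne 3$ one uses that a maximal class $3$-group of order $\ge 3^5$ appears as a Sylow only in a short list; crucially, $\PSL_3(q)$ and $\PSU_3(q)$ with $v_3(q\mp 1)=k$ give $S=B(r;1,0,0)$ with $r=2k+1$ odd, and these realize exactly the subsystem $O^3(\F)$ of index $3$ in Lemma~\ref{(1,0,0)}(ii) — but never the full $\F$, because $\Out_\F(S)$ forces an extra automorphism that no automorphism of $\PSL_3(q)$ (diagonal, field, graph) can supply, and no almost simple extension fixes $S$ while inducing it. For $r$ even ($B(r;1,0,0)$ and $B(r;1,0,2)$) one shows no simple group of Lie type has such a Sylow together with a single class of order-$9$ essentials with $\SL_2(3)$-automizer: the candidate groups $\PSL_3(q)$, $\PSU_3(q)$ only give $r$ odd, and other families with maximal class Sylows either have essentials of order $27$ or have the wrong outer automorphism structure.

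The main obstacle I expect is the Lie-type analysis, specifically isolating precisely which simple groups have a Sylow $3$-subgroup isomorphic to one of $B(r;1,0,0)$ or $B(r;1,0,2)$ and then checking that the fusion they carry never matches $\F$: one must compare not just the Sylow but the full $3$-local structure ($\F$-essential subgroups, their automizers, and $\Out_\F(S)$). I would handle this by citing the known classification of simple groups with metacyclic or maximal-class Sylow $p$-subgroups (there is a short literature — e.g.\ results going back to Blackburn, and the fusion-theoretic consequences worked out in \cite{DiazRuizViruel2007} and related papers), reducing the problem to $\PSL_3(q)$ and $\PSU_3(q)$ and a finite list of small groups, and then observing that in the $r$-odd case these realize only $O^3(\F)$ (index $3$, Lemma~\ref{(1,0,0)}(ii)) so $\F$ itself — which has strictly larger outer automorphism group — is not realized, while in the $r$-even cases of Lemmas~\ref{(1,0,0)}(i) and \ref{(1,0,2)} no simple group has the requisite Sylow at all. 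Assembling these pieces contradicts the existence of $G$, proving exoticity.
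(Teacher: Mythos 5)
Your overall strategy---reduce to an almost simple group and then run through the classification of finite simple groups---is the same as the paper's, which discharges both steps largely by citation to \cite{DiazRuizViruel2007}. But as written your argument has two genuine gaps. The first is in the reduction. If $L$ is the simple normal subgroup you produce, then $S\cap L$ is strongly $\F$-closed, and a priori it could be small, in which case $L$ controls almost none of the fusion and no contradiction is available; your appeal to ``standard arguments'' does not by itself rule this out. The step the paper actually supplies, and which you omit, is the determination of the minimal strongly $\F$-closed subgroups: any non-trivial strongly $\F$-closed $C$ is normal in $S$, so contains $s_{r-1}$, so meets an essential subgroup $V$ non-trivially; since $\Aut_\F(V)\cong\SL_2(3)$ acts irreducibly on $V$, this forces $V\le C$ and hence $\langle V^S\rangle=V\gamma_2(S)\le C$, a subgroup of index $3$ isomorphic to $B(r-1;0,0,0)$. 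Only with this in hand does \cite[Proposition 2.19]{DiazRuizViruel2007} apply to give $L\trianglelefteq G\le\Aut(L)$ with $S\cap L\ge V\gamma_2(S)$, which is what makes the subsequent case analysis finite and meaningful.

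The second gap is in the classification step, which in your write-up is a promise rather than a proof, and the one place where you commit to a concrete claim is off. For $v_3(q-1)=k$ the group $\PSL_3(q)$ has Sylow $3$-subgroup isomorphic to $B(2k;0,0,0)$ of order $3^{2k}$; it realizes the index-$3$ subsystem $O^3(\F)$ on $V_0\gamma_2(S)$, not a fusion system on $B(2k+1;1,0,0)$ itself. So the serious case to exclude in Lemma~\ref{(1,0,0)}(b) is not ``an extra automorphism of $\PSL_3(q)$ acting on the same Sylow'' but an almost simple group $G$ with $\PSL_3(q)\le G\le\Aut(\PSL_3(q))$ and $3\mid|G:\PSL_3(q)|$ (a diagonal or field extension), whose Sylow $3$-subgroup does have the right order $3^{2k+1}$; one must check that no such $G$ has Sylow isomorphic to $B(2k+1;1,0,0)$ carrying the prescribed fusion, and similarly for $\PSU_3(q)$ and for the even-$r$ cases. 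Neither your sketch nor a generic appeal to ``the wrong outer automorphism structure'' does this; the paper handles it by invoking the arguments of \cite[page 1751 (a), (b) and (c)]{DiazRuizViruel2007}, which apply precisely because the minimal strongly closed subgroup has been identified as $\langle V^S\rangle\cong B(r-1;0,0,0)$. You should either reproduce that case analysis or cite it after verifying its hypotheses.
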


\begin{proof} Suppose that $\F$ represents one of the fusion systems of interest and let  $S= B(r; 1,0,0)$ or $B(r;1,0,2)$.  If $C$ is a non-trivial strongly $\F$-closed subgroup of $S$, then $C$ is normal in $S$ and so $s_{r-1} \in C$. Thus $C \cap V \ne 1$ where $V$ is an $\F$-essential subgroup. It follows that $V \le C$ and then $\langle V^S\rangle= V\gamma_2(S) \le C$. In fact $V\gamma_2(S)$ is strongly $\F$-closed if $S= B(r;1,0,0)$ or $S= B(r;1,0,2)$ and $\F$ has only one $\F$-class of $\F$-essential subgroups. Since $\langle V^S\rangle\cong B(r-1;0,0,0)$, \cite[Proposition 2.19]{DiazRuizViruel2007} applies to say that if $\F$ is realised by a finite group $G$, then it is realized by an almost simple group.  Now  the arguments in \cite[page 1751 (a), (b) and (c)]{DiazRuizViruel2007} prove the result.
\end{proof}

\bibliographystyle{amsalpha}

\begin{thebibliography}{AKO11}

\bibitem[AKO11]{AschbacherKessarOliver2011}
Michael Aschbacher, Radha Kessar, and Bob Oliver, \emph{Fusion systems in
  algebra and topology}, London Mathematical Society Lecture Note Series, vol.
  391, Cambridge University Press, Cambridge, 2011. \MR{2848834}

\bibitem[BCP97]{bosma1997magma}
Wieb Bosma, John Cannon, and Catherine Playoust, \emph{The Magma algebra system
  I: The user language}, Journal of Symbolic Computation \textbf{24} (1997),
  no.~3-4, 235--265.

\bibitem[Bla58]{blackburn1958special}
Norman Blackburn, \emph{On a special class of $p$-groups}, Acta Mathematica
  \textbf{100} (1958), no.~1-2, 45--92.

\bibitem[Cra11]{CravenTheory}
David~A. Craven, \emph{The theory of fusion systems}, Cambridge Studies in
  Advanced Mathematics, vol. 131, Cambridge University Press, Cambridge, 2011,
  An algebraic approach. \MR{2808319}

\bibitem[DRV07]{DiazRuizViruel2007}
Antonio D{\'{\i}}az, Albert Ruiz, and Antonio Viruel, \emph{All {$p$}-local
  finite groups of rank two for odd prime {$p$}}, Trans. Amer. Math. Soc.
  \textbf{359} (2007), no.~4, 1725--1764 (electronic).

\bibitem[Gra18]{grazian2018fusion}
Valentina Grazian, \emph{Fusion systems containing pearls}, Journal of Algebra
  \textbf{510} (2018), 98--140.

\bibitem[Maz08]{mazza2008connected}
Nadia Mazza, \emph{Connected components of the category of elementary abelian
  p-subgroups}, Journal of Algebra \textbf{320} (2008), no.~12, 4242--4248.

\bibitem[MNS17]{malle2017defects}
Gunter Malle, Gabriel Navarro, and Benjamin Sambale, \emph{On defects of
  characters and decomposition numbers}, Algebra \& Number Theory \textbf{11}
  (2017), no.~6, 1357--1384.

\bibitem[PS18]{ps2018}
Chris Parker and Jason Semeraro, \emph{Fusion systems over
  $p$-groups of small order}, in preparation (2018).

\bibitem[Sam13]{sambale2013further}
Benjamin Sambale, \emph{Further evidence for conjectures in block theory},
  Algebra \& Number Theory \textbf{7} (2013), no.~9, 2241--2273.

\bibitem[Sem14]{Semeraro2014}
Jason Semeraro, \emph{Trees of fusion systems}, J. Algebra \textbf{399} (2014),
  1051--1072. \MR{3144626}


\bibitem[VL91]{VeraLopez}
Vera-L{\'o}pez, Antonio and Larrea, Bego{\~n}a, \emph{On $p$-groups of maximal class},
  Journal of Algebra,
  \textbf{137} (1991),  no.~1, 77--116.


\end{thebibliography}
\def\cprime{$'$}
\providecommand{\bysame}{\leavevmode\hbox to3em{\hrulefill}\thinspace}
\providecommand{\MR}{\relax\ifhmode\unskip\space\fi MR }
\providecommand{\MRhref}[2]{%
  \href{http://www.ams.org/mathscinet-getitem?mr=#1}{#2}
}
\providecommand{\href}[2]{#2}

\end{document}